\newcommand\dela[1]{}
\providecommand{\U}[1]{\protect\rule{.1in}{.1in}}
\newtheorem{theorem}{Theorem}[section]
\newtheorem{corollary}[theorem]{Corollary}
\newtheorem{definition}[theorem]{Definition}
\newtheorem{lemma}[theorem]{Lemma}
\newtheorem{proposition}[theorem]{Proposition}
\newtheorem{remark}[theorem]{Remark}
\newenvironment{proof}[1][Proof]{\noindent\textbf{#1.} }{\ \rule{0.5em}{0.5em}}
\numberwithin{equation}{section}
\title{Conservative interacting particles system with anomalous rate of ergodicity.
\thanks{
{Supported by EPSRC 
EP/D05379X/1}} 
 }
\author{Z. BRZE\'{Z}NIAK$^\ddag$, F. FLANDOLI$^\S$, M. NEKLYUDOV$^\ddag$,\\
B. ZEGARLI\'NSKI$^\sharp${\footnote{ On leave from Imperial College London}}
\\
$^\S$Dipartimento di Matematica Applicata, Universit\`{a} di Pisa, Italy \\ 
$^\ddag$Department of Mathematics, University of York, Heslington,
UK
\\
$^\dag$CNRS, Toulouse, France
}
\date{}
\begin{document}
\maketitle

\begin{abstract}
\noindent We analyze certain conservative interacting particle system
and establish
ergodicity of the system for a family of invariant measures. Furthermore, we show that convergence rate to equilibrium is exponential.
This result is of interest because
it presents counterexample to the standard assumption of physicists
that conservative system implies polynomial rate of convergence.

\noindent{Keywords: H\"ormander type generators, conservative interacting particle system, ergodicity.}
\end{abstract}

\section{Introduction}

In this paper we present an example of the conservative interacting particle system
with exponential rate of convergence to equilibrium.
This system naturally appears in the dyadic model of turbulence (see \cite{B-F-M-2010}).
In \cite{B-F-M-2010} it has been established that the system has anomalous dissipation.
This result seems to be the reason behind exponential rate of convergence to equilibrium.
Similar systems naturally appear in the models of heat conduction  and quantum spin chains (\cite{B2007},\cite{B2008},\cite{F-N-O},\cite{G-K-R},\cite{G-K-R-V}).

Ergodic properties of systems of interacting particles is one of the central topics of statistical mechanics.
They have been studied starting from the works of Spitzer \cite{Spi69} and Dobrushin \cite{Dob71}.
The literature of the subject is huge and we will not attempt to list it here, see \cite{Lig04} and references therein.

Interacting particle systems are usually divided into two classes: conservative and nonconservative ones. Conservative ones are presumed to have at most polynomial rate of convergence to equilibrium and dissipative ones
exponential one (\cite{LY2003}).

In the same time rigorous mathematical results about rates of convergence to equilibrium of conservative systems
has been established only in the handful of cases such as Kawasaki dynamics (\cite{BZ99},\cite{BZ99b}), Ginzburg-Landau type processes (\cite{JLQY1999},\cite{LY2003}) and Brownian moment
processes (\cite{I-N-Z}). The result of this paper shows that existence of
formal conservation law does not necessarily imply polynomial rate of convergence. Consequently, "meta" theorem that conservative interacting particle systems are ergodic with polynomial  rate of convergence to equilibrium is not correct.

\section{The system}

Let $\left(  \Omega,F_{t},P\right)  $ be a filtered probability space and
$\left(  W_{n}\right)  $ be a sequence of independent Brownian motions.
Consider the equation%
\begin{equation}
dX_{n}=k_{n-1}X_{n-1}\circ dW_{n-1}-k_{n}X_{n+1}\circ dW_{n},\qquad
X_{n}\left(  0\right)  =X_{n}^{\left(  0\right)  }\label{system}%
\end{equation}
for all $n\geq 1$, with $X_{0}=0$, $k_{0}=0$, and $k_{n}=\lambda^n, n\in\mathbb{N}$ for some
$\lambda>1$, $X_{n}^{\left(  0\right)  }$ deterministic or $F_{0}$-adapted. The stochastic integral in the system
\eqref{system} is in Stratonovich sense.
\begin{remark}
The assumption $k_n=\lambda^n, n\in\mathbb{N}$ has been imposed for simplicity. It can be relaxed to the assumption
that the sequence $\left\{\frac{k_{n+1}}{k_n}\right\}_{n=1}^{\infty}$ is nondecreasing and the first term of the sequence is bigger than $1$.
\end{remark}
Consider the space
\[
W=\left\{  \left(  x_{n}\right)  _{n\in\mathbb{N}}:\left\Vert x\right\Vert
_{W}^{2}:=\sum k_{n}^{-2}x_{n}^{2}<\infty\right\}  .
\]

\begin{definition}
We say that a sequence of continuous adapted processes $\left(  X_{n}\right)
$ is a weak (in the analytical sense) solution in $W$ of equation
(\ref{system}) if $\left(  X_{n}\right)  $ is $L^{\infty}\left(  \left[
0,T\right]  ;L^{2}\left(  \Omega;W\right)  \right)  $ and
\[
dX_{n}=k_{n-1}X_{n-1}dW_{n-1}-k_{n}X_{n+1}dW_{n}-\frac{1}{2}\left(  k_{n}%
^{2}+k_{n-1}^{2}\right)  X_{n}dt
\]
for each $n\geq1$. If we have%
\[
E\left[  \left\Vert X\left(  t\right)  \right\Vert _{W}^{2}\right]  \leq
E\left[  \left\Vert X^{\left(  0\right)  }\right\Vert _{W}^{2}\right]  ,\qquad
a.e.\text{ }t\geq0
\]
we say it is a Leray solution in $W$.
\end{definition}

\begin{theorem}\label{thm:ExistenceWeakLeray}
For every $X^{\left(  0\right)  }\in L^{2}\left(  \Omega;W\right)  $, $F_{0}%
$-measurable, there exists a weak Leray solution in $W$ of equation
(\ref{system}).
\end{theorem}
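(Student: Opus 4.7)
The approach is a finite-dimensional Galerkin approximation combined with weak compactness.

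\textbf{Galerkin setup.} For each $N\ge 1$, I would introduce the truncated system on $\mathbb{R}^N$ obtained by setting $X^N_{N+1}\equiv 0$: the It\^o equations $dX^N_n=k_{n-1}X^N_{n-1}\,dW_{n-1}-k_nX^N_{n+1}\,dW_n-\tfrac12(k_n^2+k_{n-1}^2)X^N_n\,dt$ for $1\le n<N$, together with $dX^N_N=k_{N-1}X^N_{N-1}\,dW_{N-1}-\tfrac12 k_{N-1}^2X^N_N\,dt$. This is a linear SDE in $\mathbb{R}^N$ with globally Lipschitz coefficients, so it admits a unique strong solution $X^N\in L^2(\Omega;C([0,T];\mathbb{R}^N))$ for any $F_0$-measurable initial datum $X^{(0),N}$.

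\textbf{Energy estimates.} Applying It\^o's formula to $\|X^N\|_W^2=\sum_{n=1}^N k_n^{-2}(X^N_n)^2$ and using the identity $k_n^{-2}k_{n-1}^2=\lambda^{-2}$ for $n\ge 2$, the interior contributions telescope and one arrives at
\[
d\|X^N\|_W^2=dM_N-(1-\lambda^{-2})(X^N_1)^2\,dt+(1-\lambda^{-2})(X^N_N)^2\,dt,
\]
for a local martingale $M_N$. A parallel computation with unit weights shows that $\sum_{n=1}^N(X^N_n)^2$ has zero It\^o drift (the counterpart of the Stratonovich $\ell^2$-conservation of the truncated system), so $E\sum_n(X^N_n(t))^2=E\sum_n(X^{(0),N}_n)^2$; in particular $E(X^N_N(t))^2\le E\|X^{(0),N}\|_{\ell^2}^2$. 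For $X^{(0)}\in L^2(\Omega;\ell^2)$ this yields an $N$-uniform bound on $\sup_{t\le T}E\|X^N(t)\|_W^2$.

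\textbf{Passage to the limit and Leray bound.} Weak compactness in $L^2(\Omega\times[0,T];W)$ produces a subsequential limit $X$, and linearity of the equation (together with the It\^o isometry) makes the coordinate-by-coordinate passage to the limit in the integrated equations routine, so $X$ is an analytically-weak solution. To obtain the Leray inequality I would apply It\^o's formula directly to the infinite-dimensional $\|X\|_W^2$: in the untruncated system the telescoping is exact with no boundary term, yielding
\[
E\|X(t)\|_W^2+(1-\lambda^{-2})\int_0^tE(X_1(s))^2\,ds=E\|X^{(0)}\|_W^2.
\]
Since the solution map $X^{(0)}\mapsto X$ is linear and the identity above shows it is a contraction in the $W$-norm, it extends by density from $L^2(\Omega;\ell^2)$ to all of $L^2(\Omega;W)$, and the Leray inequality survives the extension.

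\textbf{Main obstacle.} The delicate point is the positive boundary term $(1-\lambda^{-2})(X^N_N)^2$ in the Galerkin energy identity, which (unlike in standard parabolic Galerkin arguments) cannot simply be discarded. Controlling it through the auxiliary $\ell^2$-conservation of the truncated system forces the two-step ($\ell^2$-then-$W$) approximation, and the verification of the Leray identity must be performed on the infinite-dimensional limit rather than on the approximants; all remaining ingredients are standard for linear SPDE.
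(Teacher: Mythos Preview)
Your argument is essentially correct, but it takes a detour that the paper avoids by a simple change in the Galerkin truncation.

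\textbf{Where the two proofs diverge.} You truncate the \emph{Stratonovich} system (so the $N$-th equation carries only the drift $-\tfrac12 k_{N-1}^2X_N^N$), which preserves $\ell^2$-conservation but produces the positive boundary term $(1-\lambda^{-2})(X_N^N)^2$ in the $W$-energy. You then spend the rest of the proof working around it: restrict to $\ell^2$ data to bound the boundary term, pass to the limit, establish the Leray identity on the infinite-dimensional solution, and finally extend to $W$ data by density using the contraction. This last step is exactly what the paper later formalises as the construction of \emph{moderate} solutions; so your proof of Theorem~\ref{thm:ExistenceWeakLeray} in effect already builds the object of the subsequent section.

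The paper instead truncates the \emph{It\^o} system: it keeps the full drift $-\tfrac12(k_n^2+k_{n-1}^2)X_n^N$ for every $n\le N$, including $n=N$, even though the noise term $-k_NX_{N+1}^N\,dW_N$ has been removed. With this (slightly over-dissipative) choice, the computation for $q_n^N=E[(X_n^N)^2]$ telescopes to
\[
\frac{d}{dt}E\|X^N\|_W^2 \;=\; -\lambda^{-2}q_N^N-(1-\lambda^{-2})q_1^N\;\le\;0,
\]
so the $W$-bound is uniform directly for \emph{any} $X^{(0)}\in L^2(\Omega;W)$, with no $\ell^2$ step and no density extension. The Leray inequality then follows at once from weak lower semicontinuity of the $L^2(\Omega\times[a,b];W)$-norm along the Galerkin subsequence and Lebesgue differentiation.

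\textbf{One point to tighten.} Your appeal to ``It\^o's formula directly on the infinite-dimensional $\|X\|_W^2$'' is not innocent for a process known only to lie in $L^\infty([0,T];L^2(\Omega;W))$. What actually works (and what you presumably intend) is to compute $\frac{d}{dt}\sum_{n\le N}k_n^{-2}E[X_n^2]$ for finite $N$, obtain the remainder $E[X_{N+1}^2]-\lambda^{-2}E[X_N^2]$, and let $N\to\infty$ using $X\in L^\infty([0,T];L^2(\Omega;\ell^2))$; this does yield the identity you claim. The paper carries out precisely this computation in the lemma immediately following the theorem.
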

\begin{remark}
We use Galerkin type finite dimensional approximation to show existence of solution of system (\ref{system}). Different way would be to apply
results of Holevo \cite{Holevo1996}.
\end{remark}
\begin{proof}
\textbf{Step 1} (existence). For each $N\in\mathbb{N}$, consider the finite
dimensional system%
\[
dX_{n}^{N}=k_{n-1}X_{n-1}^{N}dW_{n-1}-k_{n}X_{n+1}^{N}dW_{n}-\frac{1}%
{2}\left(  k_{n}^{2}+k_{n-1}^{2}\right)  X_{n}^{N}dt,\qquad n=1,...,N
\]
with $X_{0}^{N}=X_{N+1}^{N}=0$ and the initial condition $X_{n}^{N}\left(
0\right)  $ equal to $X_{n}^{\left(  0\right)  }$, $n=1,...,N$. This system
has a unique strong solution, with all moments finite. Indeed, it immediately follows from the Theorem 3.3, p. 7 of
\cite{B-F-M-2009}. Set%
\[
q_{n}^{N}=E\left[  \left(  X_{n}^{N}\right)  ^{2}\right]  .
\]
By It\^{o} formula (we need finite fourth moments to have that the It\^{o}
terms are true martingales, then they disappear taking expected value) we have
(we drop $N$)
\begin{align*}
q_{n}^{\prime} &  =-\left(  k_{n}^{2}+k_{n-1}^{2}\right)  q_{n}+k_{n-1}%
^{2}q_{n-1}+k_{n}^{2}q_{n+1}\\
&  =-k_{n-1}^{2}\left(  q_{n}-q_{n-1}\right)  +k_{n}^{2}\left(  q_{n+1}%
-q_{n}\right)
\end{align*}
for $n=1,...,N$, with $q_{0}=q_{N+1}=0$. Denote by $\left\Vert \cdot
\right\Vert _{W}^{2}$ the same norm introduced above also in the case of a
finite number of components. We have%
\begin{align*}
\frac{d}{dt}E\left[  \left\Vert X^{N}\right\Vert _{W}^{2}\right]   &
=\sum_{n=1}^{N}k_{n}^{-2}\frac{d}{dt}q_{n}^{N}=\\
&  =-\sum_{n=1}^{N}k_{n}^{-2}k_{n-1}^{2}\left(  q_{n}-q_{n-1}\right)
+\sum_{n=1}^{N}k_{n}^{-2}k_{n}^{2}\left(  q_{n+1}-q_{n}\right)  \\
&  =-\lambda^{-2}\sum_{n=2}^{N}\left(  q_{n}-q_{n-1}\right)  +\sum
_{n=1}^{N}\left(  q_{n+1}-q_{n}\right)  \leq -q_{1}.
\end{align*}
Since $q_{1}\geq0$ by definition, we have%
\begin{equation}
E\left[  \left\Vert X^{N}\left(  t\right)  \right\Vert _{W}^{2}\right]  \leq
E\left[  \left\Vert X^{N}\left(  0\right)  \right\Vert _{W}^{2}\right]
,\qquad t\geq0.\label{energy approximated}%
\end{equation}
Thus the sequence $\left(  X^{N}\right)  _{N\geq0}$ is bounded in $L^{\infty
}\left(  \left[  0,T\right]  ;L^{2}\left(  \Omega;W\right)  \right)  $.
Therefore, there exists a subsequence $N_{k}\rightarrow\infty$ such that
$\left(  X_{n}^{\left(  N_{k}\right)  }\right)  _{n\geq1}$ converges weakly to
some $\left(  X_{n}\right)  _{n\geq1}$ in $L^{2}\left(  \Omega\times\left[
0,T\right]  ;W\right)  $ and also weak star in $L^{\infty}\left(  \left[
0,T\right]  ;L^{2}\left(  \Omega;W\right)  \right)  $. Now the proof proceeds
by standard arguments typical of equations with monotone operators (which thus
apply to linear equations), presented in \cite{Pardoux}, \cite{KrylovRoz}. The
subspace of $L^{2}\left(  \Omega\times\left[  0,T\right]  ;W\right)  $ of
progressively measurable processes is strongly closed, hence weakly closed,
hence $\left(  X_{n}\right)  _{n\geq1}$ is progressively measurable. The
one-dimensional stochastic integrals which appear in each equation of the
system are (strongly) continuous linear operators from the subspace of
$L^{2}\left(  \Omega\times\left[  0,T\right]  \right)  $ of progressively
measurable processes to $L^{2}\left(  \Omega\right)  $, hence they are weakly
continuous, a fact that allows us to pass to the limit in each one of the
linear equations of the system. A posteriori, from these integral equations,
it follows that there is a modification such that all components are
continuous. The proof of existence is complete.
\begin{remark}\label{rem:l_2_Uniqueness}
If initial condition $X_0\in L^2(\Omega,l^2)$ then there exists unique solution $X\in L^{\infty}\left(  \left[
0,T\right]  ;L^{2}\left(  \Omega;l^2\right)  \right)$ of equation \eqref{system} (see Theorem 3.3 in \cite{B-F-M-2009}).
\end{remark}
\textbf{Step 2} (Leray solution). From (\ref{energy approximated}) and the
definition of $X^{N}\left(  0\right)  $ we have%
\[
E\left[  \left\Vert X^{N}\left(  t\right)  \right\Vert _{W}^{2}\right]  \leq
E\left[  \left\Vert X^{\left(  0\right)  }\right\Vert _{W}^{2}\right]  ,\qquad
t\geq0.
\]
Hence%
\[
E\left[  \int_{a}^{b}\left\Vert X^{N}\left(  t\right)  \right\Vert _{W}%
^{2}dt\right]  \leq\left(  b-a\right)  E\left[  \left\Vert X^{\left(
0\right)  }\right\Vert _{W}^{2}\right]  ,\qquad0\leq a\leq b.
\]
Weak convergence in $L^{2}\left(  \Omega\times\left[  0,T\right]  ;W\right)  $
implies that
\[
E\left[  \int_{a}^{b}\left\Vert X\left(  t\right)  \right\Vert _{W}%
^{2}dt\right]  \leq\left(  b-a\right)  E\left[  \left\Vert X^{\left(
0\right)  }\right\Vert _{W}^{2}\right]  ,\qquad0\leq a\leq b.
\]
By Lebesgue differentiation theorem, we get $E\left[  \left\Vert X\left(
t\right)  \right\Vert _{W}^{2}\right]  \leq E\left[  \left\Vert X^{\left(
0\right)  }\right\Vert _{W}^{2}\right]  $ for a.e. $t$.

\end{proof}
\begin{remark}
The question of uniqueness of weak Leray solutions of system \eqref{system} is an open problem. The \dela{crux of the problem is the} difficulty lies in
 showing  that the difference of two weak non trivial \footnote{non identically zero and non proportional to each other} Leray solutions $X^1$ and $X^2$ is a weak Leray solution itself. It seems that this difficulty \dela{stems from} is related to  the fact that the closed infinite system of linear equations
for a sequence $q_n(t)=\mathbb{E}\left[|X_n^1(t)-X_n^2(t)|^2\right],n=1,\ldots$:
\begin{align}
\label{eqn-q_n}
q_{n}^{\prime}  \dela{ &=-\left(  k_{n}^{2}+k_{n-1}^{2}\right)  q_{n}+k_{n-1}%
^{2}q_{n-1}+k_{n}^{2}q_{n+1}
\\}
&  =-k_{n-1}^{2}\left(  q_{n}-q_{n-1}\right)  +k_{n}^{2}\left(  q_{n+1}%
-q_{n}\right)
\end{align}
has non-unique solutions in a positive cone of a Banach space space $l^1_k=\{x\in \mathbb{R}^{\mathbb{N}}: \sum\limits_{n=1}^{\infty}\frac{|x_n|}{k_n^2}< \infty\}$ endowed with a norm $|x|_{l_k^1}=\sum\limits_{n=1}^{\infty}\frac{|x_n|}{k_n^2}$.
Indeed, denote $c=2\sup\limits_{t\geq 0}\mathbb{E}\left[||X^1(t)||_W^2+||X^2(t)||_W^2\right]$. Notice that $0<c<\infty$ because $X^1$ and $X^2$ are non identically zero Leray solutions of the system \eqref{system}. Define $p_n=\frac{q_n}{k_n^2c},n\in \mathbb{N}$, $p=(p_n)_{n=1}^{\infty}$.
Then from the identity
\begin{equation}\label{eqn:TotalProb}
\sum\limits_{n=1}^{\infty}p_n(t)=\frac{\mathbb{E}||X^1(t)-X^2(t)||_W^2}{2\sup\limits_{s\geq 0}\mathbb{E}\left[||X^1(s)||_W^2+||X^2(s)||_W^2\right]},
\end{equation}
we can deduce that
\begin{equation}
\sum\limits_{n=1}^{\infty}p_n(t)\leq 1, t\geq 0.\label{eqn:HonCond_1}
\end{equation}
Moreover,
\begin{equation}\label{eqn:HonCond_2}
\exists t_0\geq 0 \mbox{ such that } \sum\limits_{n=1}^{\infty}p_n(t_0)<1.
\end{equation}
Indeed, otherwise it follows from the identity \eqref{eqn:TotalProb} that $X^2=-X^1$ a.s.
Furthermore, because $k_n=\lambda^n$, we have
\begin{align}\label{eqn:TransSystem}
\frac{d}{dt}p&=pA,\\
A&=\left(
\begin{array}{cccccccc}
-k_1^2 & 1 & 0 & 0 & \ldots & \ldots & \ldots & \ldots\\
k_2^2 &  -(k_1^2+k_2^2) & k_1^2 & 0 & \ldots & \ldots & \ldots & \ldots\\
0 & k_3^2 & - (k_2^2+k_3^2) & k_2^2 & \ldots & \ldots & \ldots & \ldots\\
\ldots & \ldots & \ldots & \ldots & \ldots & \ldots & \ldots & \ldots\\
\ldots & \ldots & 0 &k_n^2 & -(k_n^2+k_{n-1}^2) & k_{n-1}^2  & 0 & \ldots \\
\ldots & \ldots & \ldots & \ldots & \ldots & \ldots & \ldots& \ldots
\end{array}
\right).\nonumber
\end{align}
Matrix $A$ has tridiagonal form with positive off diagonal entries. Consequently, we can construct birth and death process $\xi_t,t\geq 0$ on some new probability space $(S,\mathcal{G},\mathbb{P}')$ such that matrix $A$ is a $q$-matrix of the process (see details of construction at p. 8-10 \cite{B-F-M-2010}) and $p_n(t)=\mathbb{P}'(\xi_t=n),n\in \mathbb{N}$.
Now the question of uniqueness of the solution of the system \eqref{eqn:TransSystem} in conjunction with conditions \eqref{eqn:HonCond_1} and \eqref{eqn:HonCond_2} can be reformulated as the question of uniqueness (in law) of the process $\xi_t,t\geq 0$ with given $q$-matrix $A$.
Hence the non uniqueness of the solution of the system \eqref{eqn:TransSystem} with conditions \eqref{eqn:HonCond_1} and \eqref{eqn:HonCond_2}
follow for instance from the criterion in  Theorem 3.2.3, p. 101 of monograph \cite{Anderson} (see also the original paper by Kato \cite{Kato_1954}).
Indeed, we have that the criterion number $S$ is less than infinity for the $q$-matrix $A$ defined above and condition \eqref{eqn:HonCond_2} means that the solution is dishonest\footnote{i.e. $\mathbb{P}'$-a.s. process $\xi$  escapes to infinity in finite time}.
\end{remark}
Now we will show existence and uniqueness of solution of system \eqref{system} in a more restrictive class of \textit{moderate} solutions.
We will need following Lemma.
\begin{lemma}
If $X(0)\in L^2(\Omega,l^2)$ then the unique weak solution $X\in L^{\infty}\left(  \left[
0,T\right]  ;L^{2}\left(  \Omega;l^2\right)  \right)$ of the system \eqref{system}
(constructed in Theorem 3.3 in \cite{B-F-M-2009}) is a Leray solution i.e.
\[
E\left[  \left\Vert X\left(  t\right)  \right\Vert _{W}^{2}\right]  \leq
E\left[  \left\Vert X^{\left(  0\right)  }\right\Vert _{W}^{2}\right]  ,\qquad
a.e.\text{ }t\geq0
\]
\end{lemma}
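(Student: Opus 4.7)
The plan is to show that the weak Leray solution $\tilde X$ constructed in Step~1 of the proof of Theorem~\ref{thm:ExistenceWeakLeray} via Galerkin truncation actually coincides with the unique $l^2$-solution $X$ furnished by \cite{B-F-M-2009}; the $W$-energy inequality for $X$ will then be inherited from the already-established inequality for $\tilde X$.

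First I would re-examine the Galerkin approximation $(X^N)$. Running the same It\^o computation used to derive the $W$-energy identity, but this time summing the equations
\[
q_n' = k_{n-1}^2(q_{n-1}-q_n)+k_n^2(q_{n+1}-q_n),\qquad n=1,\dots,N,
\]
(with $q_0=q_{N+1}=0$) without the weights $k_n^{-2}$, gives a telescoping identity whose boundary terms reduce to $-k_N^2 q_N\le 0$. Consequently
\[
E\left[\|X^N(t)\|_{l^2}^2\right]\le E\left[\|X^{(0)}\|_{l^2}^2\right],\qquad t\ge 0.
\]
Thus $(X^N)$ is bounded in $L^{\infty}([0,T];L^2(\Omega;l^2))$, and by Banach--Alaoglu a (further) subsequence converges weak-$\ast$ in this space to some limit $Y$. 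Because $[0,T]$ is bounded, weak-$\ast$ convergence in $L^\infty([0,T];L^2(\Omega;l^2))$ entails weak convergence in $L^2(\Omega\times[0,T];l^2)$.

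Next I would identify $Y$ with the unique $l^2$-solution $X$ of \eqref{system}. The identification proceeds exactly as in Theorem~\ref{thm:ExistenceWeakLeray}: progressive measurability of $Y$ passes to weak limits, and the one-dimensional stochastic integrals appearing in each scalar equation of \eqref{system} are continuous linear maps on the progressively measurable subspace of $L^2(\Omega\times[0,T])$, hence weakly continuous. One can therefore pass to the limit term-by-term in the finite-dimensional It\^o equations and obtain that $Y$ is a weak $l^2$-solution. Remark~\ref{rem:l_2_Uniqueness} (i.e.\ the uniqueness part of Theorem~3.3 of \cite{B-F-M-2009}) then forces $Y=X$. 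Uniqueness of the limit also implies that the whole sequence $(X^N)$ (not only a subsequence) converges to $X$ weakly-$\ast$ in $L^\infty([0,T];L^2(\Omega;l^2))$, and because of the continuous embedding $l^2\hookrightarrow W$, also weakly in $L^2(\Omega\times[0,T];W)$.

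Finally, I would transfer the $W$-energy bound. Since $E\|X^N(t)\|_W^2\le E\|X^{(0)}\|_W^2$ for every $t\ge 0$ (this is \eqref{energy approximated}), integrating on $[a,b]$ and using weak lower-semicontinuity of the norm on $L^2(\Omega\times[a,b];W)$ yields
\[
E\left[\int_a^b \|X(t)\|_W^2\,dt\right]\le (b-a)E\left[\|X^{(0)}\|_W^2\right],\qquad 0\le a\le b.
\]
A standard application of the Lebesgue differentiation theorem then gives $E\|X(t)\|_W^2\le E\|X^{(0)}\|_W^2$ for a.e.\ $t\ge 0$, which is the Leray property. The only delicate point in this argument is the identification step $Y=X$; it relies on uniqueness in $l^2$ from \cite{B-F-M-2009}, which is exactly why one must first establish the auxiliary $l^2$-bound on $X^N$ rather than trying to identify weak limits directly in $W$, where uniqueness is not available (cf.\ the remark preceding the Lemma).
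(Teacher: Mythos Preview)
Your argument is correct, but it follows a genuinely different route from the paper's own proof. You work through the Galerkin approximations $X^N$: you establish an auxiliary $l^2$-energy bound on $X^N$ via the telescoping identity $\sum_{n=1}^N q_n'=-k_N^2 q_N\le 0$, extract a weak limit in $l^2$, identify it with the unique $l^2$-solution $X$ using uniqueness from \cite{B-F-M-2009}, and finally transfer the $W$-energy inequality by weak lower semicontinuity and Lebesgue differentiation. The paper instead argues \emph{directly} on the $l^2$-solution $X$ itself: it truncates the $W$-norm at level $N$, differentiates in time, and shows
\[
\frac{d}{dt}\sum_{n=1}^{N}k_n^{-2}\,\mathbb{E}\bigl[|X_n(t)|^2\bigr]\;\le\;\mathbb{E}\bigl[|X_{N+1}(t)|^2\bigr],
\]
so that after integrating, the error term $\int_0^t \mathbb{E}[|X_{N+1}(s)|^2]\,ds$ vanishes as $N\to\infty$ precisely because $X\in L^\infty([0,T];L^2(\Omega;l^2))$. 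No compactness, no identification of limits, and no Lebesgue differentiation are needed; the inequality is even obtained for \emph{every} $t\ge 0$ rather than a.e.\ $t$. Your approach is longer but has the incidental benefit of showing that the Galerkin limit and the $l^2$-solution coincide, which the paper uses later anyway (see the Remark after Theorem~\ref{thm:ExUniContdep}).
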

\begin{proof}
We have
\begin{align*}
\frac{d}{dt}\sum\limits_{n=1}^N &\frac{1}{k_n^2}\mathbb{E}\left[|X_n(t)|^2\right]\\
&=-\lambda^{-2}\sum\limits_{n=2}^N(\mathbb{E}\left[|X_n(t)|^2\right]-\mathbb{E}\left[|X_{n-1}(t)|^2\right])+
\sum\limits_{n=1}^N(\mathbb{E}\left[|X_{n+1}(t)|^2\right]-\mathbb{E}\left[|X_n(t)|^2\right])\\
&=\mathbb{E}\left[|X_{N+1}(t)|^2\right]+(\lambda^{-2}-1)\mathbb{E}\left[|X_1(t)|^2\right]-\lambda^2\mathbb{E}\left[|X_N(t)|^2\right]\\
&\leq \mathbb{E}\left[|X_{N+1}(t)|^2\right],N\in \mathbb{N}.
\end{align*}
Consequently, we have
\begin{equation*}
\sum\limits_{n=1}^N\frac{1}{k_n^2}\mathbb{E}\left[|X_n(t)|^2\right]\leq
\sum\limits_{n=1}^N\frac{1}{k_n^2}\mathbb{E}\left[|X_n(0)|^2\right]+\int\limits_0^t\mathbb{E}\left[|X_{N+1}(s)|^2\right]\, ds.
\end{equation*}
Moreover, by $X\in L^{\infty}\left(  \left[
0,T\right]  ;L^{2}\left(  \Omega;l^2\right)  \right)$ we have
\[
\lim\limits_{N\to\infty}\int\limits_0^t\mathbb{E}\left[|X_{N+1}(s)|^2\right]\, ds=0,
\]
and the result follows.
\end{proof}
\begin{corollary}\label{cor:Aux_1}
Assume that the sequence $\{X^{(N)}(0)\}_{N\in\mathbb{N}}\subset L^2(\Omega,l^2)$ of $\mathcal{F}_0$-measurable functions satisfy
\[
(L^2(\Omega,W))-\lim\limits_{N\to\infty}X^{(N)}(0)=X(0)\in L^2(\Omega,W).
\]
Then the sequence $\{X^{(N)}\}_{N\in\mathbb{N}}$ is a Cauchy sequence in $L^{\infty}([0,T],L^2(\Omega,W))$ and, consequently, there exist
$X\in L^{\infty}([0,T],L^2(\Omega,W))$ such that 
\[
X=(L^{\infty}(\left[0,T\right], L^2(\Omega,W)))-\lim\limits_{N\to\infty}X^{(N)}.
\]
\end{corollary}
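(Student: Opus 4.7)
The plan is to exploit linearity of the system \eqref{system} together with the $W$-norm energy inequality just established in the preceding Lemma. For each pair $N, M$, set $Y^{N,M} = X^{(N)} - X^{(M)}$. By linearity, $Y^{N,M}$ is a weak solution of \eqref{system} with initial condition $X^{(N)}(0) - X^{(M)}(0)$, and this initial condition lies in $L^2(\Omega, l^2)$ because each $X^{(N)}(0)$ does. By Remark \ref{rem:l_2_Uniqueness}, this $Y^{N,M}$ coincides with the unique $l^2$-valued weak solution starting from $X^{(N)}(0) - X^{(M)}(0)$, so the previous Lemma is applicable.

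Applying that Lemma to $Y^{N,M}$ yields, for almost every $t \in [0,T]$,
\[
\mathbb{E}\bigl[\|X^{(N)}(t) - X^{(M)}(t)\|_W^2\bigr] \leq \mathbb{E}\bigl[\|X^{(N)}(0) - X^{(M)}(0)\|_W^2\bigr].
\]
Taking essential supremum over $t \in [0,T]$ on the left-hand side and using that the right-hand side tends to zero as $N, M \to \infty$ (by the hypothesis that $\{X^{(N)}(0)\}$ is convergent, hence Cauchy, in $L^2(\Omega, W)$), I conclude that $\{X^{(N)}\}$ is a Cauchy sequence in $L^\infty([0,T], L^2(\Omega, W))$. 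Completeness of this Banach space then produces a limit $X \in L^\infty([0,T], L^2(\Omega, W))$.

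The only nontrivial point to check is that the Lemma may indeed be applied to the difference $Y^{N,M}$: one needs linearity of the equation (immediate from the structure of \eqref{system}), the fact that the difference of two $L^2(\Omega, l^2)$-valued weak solutions is again such a weak solution with the difference of the initial data, and the $l^2$-uniqueness from Remark \ref{rem:l_2_Uniqueness} so that the Lemma identifies $Y^{N,M}$ as the unique solution whose Leray-type inequality has been verified. All of these are routine, making the argument a direct corollary of the Lemma.
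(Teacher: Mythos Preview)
Your argument is correct and is exactly the intended one: the paper states this result as an immediate corollary of the preceding Lemma without writing out a proof, and your use of linearity together with the $l^2$-uniqueness from Remark~\ref{rem:l_2_Uniqueness} to apply the Lemma to the difference $X^{(N)}-X^{(M)}$ is precisely the reasoning the authors have in mind.
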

\begin{corollary}\label{cor:Aux_2}
 Assume that the sequences $\{\widetilde{X}^{(N)}(0)\}_{N\in\mathbb{N}},\{X^{(N)}(0)\}_{N\in\mathbb{N}}\subset L^2(\Omega,l^2)$ of $\mathcal{F}_0$-measurable functions satisfy
\[
(L^2(\Omega,W))-\lim\limits_{N\to\infty}X^{(N)}(0)=X(0)\in L^2(\Omega,W),
\]
\[
(L^2(\Omega,W))-\lim\limits_{N\to\infty}\widetilde{X}^{(N)}(0)=\widetilde{X}(0)\in L^2(\Omega,W),
\]
and
\[
X=(L^{\infty}(\left[0,T\right], L^2(\Omega,W)))-\lim\limits_{N\to\infty}X^{(N)}, \widetilde{X}=(L^{\infty}(\left[0,T\right], L^2(\Omega,W)))-\lim\limits_{N\to\infty}\widetilde{X}^{(N)}.
\]
Then
\[
\mathbb{E}\left[||X(t)-\widetilde{X}(t)||_W^2\right]\leq \mathbb{E}\left[||X(0)-\widetilde{X}(0)||_W^2\right].
\]
\end{corollary}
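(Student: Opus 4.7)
The plan is to exploit linearity of the system \eqref{system} and then reduce the claim to the energy inequality for $l^{2}$-valued solutions that was established in the previous lemma, before passing to the limit via Corollary~\ref{cor:Aux_1}.

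\textbf{Step 1 (reduction via linearity).} Since equation \eqref{system} is linear, the difference $Y^{(N)}:=X^{(N)}-\widetilde{X}^{(N)}$ solves \eqref{system} with $\mathcal{F}_{0}$-measurable initial condition $Y^{(N)}(0)=X^{(N)}(0)-\widetilde{X}^{(N)}(0)\in L^{2}(\Omega,l^{2})$. By Remark~\ref{rem:l_2_Uniqueness}, $Y^{(N)}$ coincides with the unique solution in $L^{\infty}([0,T];L^{2}(\Omega;l^{2}))$.

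\textbf{Step 2 (apply the previous lemma).} The preceding lemma, applied to $Y^{(N)}$, yields the Leray-type energy inequality in $W$:
\[
\mathbb{E}\bigl[\|Y^{(N)}(t)\|_{W}^{2}\bigr]\leq \mathbb{E}\bigl[\|Y^{(N)}(0)\|_{W}^{2}\bigr]\qquad \text{for a.e. } t\in[0,T].
\]
The right-hand side equals $\mathbb{E}[\|X^{(N)}(0)-\widetilde{X}^{(N)}(0)\|_{W}^{2}]$, and by the assumed $L^{2}(\Omega,W)$-convergence of the initial conditions, it converges as $N\to\infty$ to $\mathbb{E}[\|X(0)-\widetilde{X}(0)\|_{W}^{2}]$.

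\textbf{Step 3 (pass to the limit).} By Corollary~\ref{cor:Aux_1} we have $X^{(N)}\to X$ and $\widetilde{X}^{(N)}\to \widetilde{X}$ in $L^{\infty}([0,T];L^{2}(\Omega;W))$; hence the difference $Y^{(N)}$ converges to $X-\widetilde{X}$ uniformly in $t$ in $L^{2}(\Omega;W)$-norm, so in particular $\mathbb{E}[\|Y^{(N)}(t)\|_{W}^{2}]\to \mathbb{E}[\|X(t)-\widetilde{X}(t)\|_{W}^{2}]$ for every $t\in[0,T]$. Passing to the limit in the inequality of Step~2 on the set of full measure where it holds for all $N$ simultaneously (a countable intersection of full-measure sets) yields the claim.

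\textbf{Main obstacle.} The only subtle point is matching the "for a.e.\ $t$" in the energy inequality from the preceding lemma with the uniform-in-$t$ convergence provided by Corollary~\ref{cor:Aux_1}: one must intersect countably many full-measure time-sets to pass to the limit simultaneously in $N$. Everything else is immediate from linearity and continuity of the norm under $L^{2}(\Omega;W)$-convergence.
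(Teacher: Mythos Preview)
Your proof is correct and follows essentially the same route as the paper: use linearity so that $Y^{(N)}=X^{(N)}-\widetilde X^{(N)}$ is the unique $l^2$-solution, apply the preceding lemma to obtain the Leray inequality for $Y^{(N)}$, then pass to the limit. The only cosmetic differences are that the paper writes out an explicit $(1+\epsilon)$-triangle inequality to pass to the limit (equivalent to your use of continuity of the $L^2(\Omega;W)$-norm), and that in your Step~3 you invoke Corollary~\ref{cor:Aux_1} for the convergence $X^{(N)}\to X$, $\widetilde X^{(N)}\to\widetilde X$, whereas this convergence is already part of the hypotheses of the present corollary and need not be re-derived.
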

\begin{proof}
We have by triangle inequality and inequality $(a+b)^2\leq (1+\epsilon)a^2+ (1+\frac{1}{\epsilon})b^2,a,b\in \mathbb{R}\epsilon >0$ that for any $\epsilon>0$ there exists $C(\epsilon)>0$ such that
\begin{align*}
\mathbb{E}\left[||X(t)-\widetilde{X}(t)||_W^2\right]&\leq C(\epsilon) \mathbb{E}\left[||X(t)-X^{(N)}(t)||_W^2\right]\\
&+C(\epsilon)\mathbb{E}\left[||\widetilde{X}(t)-\widetilde{X}^{(N)}(t)||_W^2\right]\\
&+(1+\epsilon)\mathbb{E}\left[||X^{(N)}(t)-\widetilde{X}^{(N)}(t)||_W^2\right].
\end{align*}
Taking limit $N\to \infty$ we conclude that
\begin{align*}
\mathbb{E}\left[||X(t)-\widetilde{X}(t)||_W^2\right]&\leq (1+\epsilon) \lim\limits_{N\to\infty}\mathbb{E}\left[||X^{(N)}(t)-\widetilde{X}^{(N)}(t)||_W^2\right]\\
&\leq (1+\epsilon) \lim\limits_{N\to\infty}\mathbb{E}\left[||X^{(N)}(0)-\widetilde{X}^{(N)}(0)||_W^2\right]\\
&\leq (1+\epsilon) \mathbb{E}\left[||X(0)-\widetilde{X}(0)||_W^2\right],
\end{align*}
and the result follows.
\end{proof}

Now we are ready to define class of moderate solutions.
\begin{definition}
We call $X\in L^{\infty}(\left[0,T\right], L^2(\Omega,W))$ with initial condition $X(0)\in L^2(\Omega,W)$ a moderate solution
of the system \eqref{system} iff $X$ is a weak Leray solution and there exists sequence $X^{(N)}\in L^{\infty}\left(\Omega\times\left[  0,T\right],l_2\right)$ of weak solutions of system \eqref{system} with $X^{(N)}(0)\in L^2(\Omega,l^2)$ such that
\[
X=(L^{\infty}(\left[0,T\right], L^2(\Omega,W)))-\lim\limits_{N\to\infty}X^{(N)}.
\]
\end{definition}
\begin{theorem}\label{thm:ExUniContdep}
Given $X(0)\in L^2(\Omega,W)$ there exists a unique moderate solution of the system \eqref{system}. Furthermore, it depends continuously on its initial condition
in the following sense. If $X^{\eta}$ and $X^{\rho}$ are the solutions
corresponding to the initial conditions $\eta,\rho\in L^{2}\left(
\Omega;W\right)  $, $F_{0}$-measurable, then:

i)
\[
E\left[  \left\Vert X^{\eta}\left(  t\right)  -X^{\rho}\left(  t\right)
\right\Vert _{W}^{2}\right]  \leq E\left[  \left\Vert \eta-\rho\right\Vert
_{W}^{2}\right]  ,\qquad a.e.\text{ }t\geq0
\]

ii) for every $N>0$
\[
E\left[  \sum_{n=1}^{N}k_{n}^{-2}\left(  X_{n}^{\eta}\left(  t\right)
-X_{n}^{\rho}\left(  t\right)  \right)  ^{2}\right]  \leq E\left[  \left\Vert
\eta-\rho\right\Vert _{W}^{2}\right]  ,\qquad\text{for all }t\geq0.
\]
\end{theorem}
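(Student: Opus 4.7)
The plan is to construct the moderate solution as a $W$-limit of $l^2$-approximations obtained by truncating the initial datum, and then read off uniqueness and continuous dependence from Corollaries~\ref{cor:Aux_1} and \ref{cor:Aux_2}.

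\textbf{Existence.} Given $X(0)\in L^2(\Omega,W)$, I would set $X^{(N)}(0):=(X_1(0),\ldots,X_N(0),0,0,\ldots)$. Since $\mathbb{E}[X_n(0)^2]\le k_n^2\,\mathbb{E}[\|X(0)\|_W^2]<\infty$ for every $n$, each truncation lies in $L^2(\Omega,l^2)$, and dominated convergence on the tail $\sum_{n>N}k_n^{-2}X_n(0)^2$ gives $X^{(N)}(0)\to X(0)$ in $L^2(\Omega,W)$. Remark~\ref{rem:l_2_Uniqueness} then produces unique $l^2$-weak solutions $X^{(N)}$, and Corollary~\ref{cor:Aux_1} yields a limit $X\in L^\infty([0,T],L^2(\Omega,W))$. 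To qualify as moderate, $X$ must also be a weak Leray solution: I would pass to the limit termwise in the integral form of \eqref{system}, using $\mathbb{E}[(X_n^{(N)}(t)-X_n(t))^2]\le k_n^2\,\mathbb{E}[\|X^{(N)}(t)-X(t)\|_W^2]$ together with the It\^o isometry to handle each stochastic integral, and passing to the limit in the Leray bound $\mathbb{E}[\|X^{(N)}(t)\|_W^2]\le\mathbb{E}[\|X^{(N)}(0)\|_W^2]$ (supplied by the Lemma above).

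\textbf{Uniqueness and continuous dependence (i).} If $X^\eta$ and $X^\rho$ are moderate solutions associated with $\eta,\rho\in L^2(\Omega,W)$, their defining approximations have initial conditions converging in $L^2(\Omega,W)$ to $\eta$ and $\rho$; applying Corollary~\ref{cor:Aux_2} to these sequences directly delivers (i). Specialising to $\eta=\rho$ yields $\mathbb{E}[\|X^\eta(t)-X^\rho(t)\|_W^2]=0$ for a.e.\ $t$, and continuity of the trajectories in $W$ then forces the two processes to coincide, proving uniqueness.

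\textbf{Continuous dependence (ii).} Writing $Y_n:=X_n^\eta-X_n^\rho$, the pointwise inequality $\sum_{n=1}^N k_n^{-2}Y_n(t)^2\le\|Y(t)\|_W^2$ combined with (i) gives the claimed bound for a.e.\ $t$. To upgrade to every $t\ge 0$, I would use the fact that each $Y_n$ is a.s.\ continuous (as a component of a weak Leray solution), so $t\mapsto Y_n(t)^2$ is a.s.\ continuous and Fatou's lemma implies that $t\mapsto \mathbb{E}[Y_n(t)^2]$ is lower semicontinuous. The finite sum $f(t):=\mathbb{E}[\sum_{n=1}^N k_n^{-2}Y_n(t)^2]$ is therefore also lower semicontinuous, and combined with the a.e.\ bound $f\le\mathbb{E}[\|\eta-\rho\|_W^2]$ this yields the estimate at every $t_0$ by taking $t\to t_0$ through the full-measure set on which the bound holds.

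The main obstacle I anticipate is not computational but conceptual: verifying that the truncation limit $X$ is genuinely a weak Leray solution (so that it qualifies as moderate under the Definition above), which requires commuting $N\to\infty$ with the stochastic integrals in \eqref{system}. This works because $L^\infty([0,T],L^2(\Omega,W))$-convergence controls each individual component uniformly in time through the weight $k_n^2$, which makes the It\^o isometry applicable termwise.
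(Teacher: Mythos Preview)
Your proposal is correct and follows essentially the same route as the paper: existence via Corollary~\ref{cor:Aux_1}, uniqueness and (i) via Corollary~\ref{cor:Aux_2}, and (ii) from (i) together with continuity of the individual components and Fatou's lemma. You have simply filled in more detail than the paper's very terse proof, in particular the explicit truncation $X^{(N)}(0)$ and the verification that the limit is itself a weak Leray solution (which the paper leaves implicit in citing Corollary~\ref{cor:Aux_1}). One small imprecision: in the uniqueness step you invoke ``continuity of the trajectories in $W$'', but the definition of weak solution only guarantees continuity of each component $X_n$; this is already enough, since $E[\|X^\eta(t)-X^\rho(t)\|_W^2]=0$ for a.e.\ $t$ gives $E[(X_n^\eta(t)-X_n^\rho(t))^2]=0$ for a.e.\ $t$ and every $n$, and component-wise continuity then extends this to all $t$.
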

\begin{proof}
Existence has been shown in the Corollary \eqref{cor:Aux_1}.
Uniqueness and (i) follow from the Corollary \eqref{cor:Aux_2}. Then (ii)\ holds
by continuity of single components and Fatou theorem.
\end{proof}
\begin{remark}
The same arguments show that the weak Leray solution constructed in the Theorem \eqref{thm:ExistenceWeakLeray} is a moderate solution.
\end{remark}

\section{Markov property}
\label{sec:MarkovProp}

%
%
%
%
%

We have proved that, for every $x\in W$ there is a unique moderate solution
$\left(  X_{n}^{x}\left(  t\right)  \right)  $ in $W$. Let us prove that the
family $X^{x}$ is a Markov process.
\begin{definition}\label{def:Semigroup}
Define the operator $P_{t}$ on
$B_{b}\left(  W\right)  $ as%
\[
\left(  P_{t}\varphi\right)  \left(  x\right)  :=E\left[  \varphi\left(
X^{x}\left(  t\right)  \right)  \right]  .
\]
By the previous result, $P_{t}$ is well defined also from $C_{b}\left(
W\right)  $ to $C_{b}\left(  W\right)  $.
\end{definition}
\begin{proposition}\label{prop:Markovness}
We have%
\begin{equation}
E\left[  \varphi\left(  X^{x}\left(  t+s\right)  \right)  |F_{t}\right]
=\left(  P_{s}\varphi\right)  \left(  X^{x}\left(  t\right)  \right)
\label{Markov identity}%
\end{equation}
for all $\varphi\in C_{b}\left(  W\right)  $, hence the family $X^{x}$ is a
Markov process. The Markov semigroup $P_{t}$ is Feller.
\end{proposition}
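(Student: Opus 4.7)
The plan is to exploit the uniqueness of moderate solutions from Theorem \ref{thm:ExUniContdep} together with the standard argument that the solution "restarted from time $t$" is again a moderate solution of the system driven by the shifted Brownian motions, which are independent of $F_t$. For fixed $t\geq 0$, I would set $\widetilde{W}_n(s):=W_{n}(t+s)-W_n(t)$, which yields a sequence of independent Brownian motions with respect to the filtration $\widetilde{F}_s:=F_{t+s}$, and is independent of $F_t$. Defining $Y(s):=X^x(t+s)$, a direct rewriting of the integral form of \eqref{system} shows that $Y$ is a weak solution (in the analytical sense) of \eqref{system} driven by $\widetilde{W}$ with $\widetilde{F}_0=F_t$-measurable initial condition $X^x(t)\in L^2(\Omega;W)$, and it is a Leray solution because the energy estimate is preserved under time-shift.

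Next I would check that $Y$ is in fact moderate by truncating the initial condition: take $X^{x,(N)}(t)$ the projection of $X^x(t)$ onto the first $N$ coordinates (which lies in $L^2(\Omega;l^2)$), let $Y^{(N)}$ be the corresponding $l^2$-valued weak solution driven by $\widetilde{W}$ produced by Theorem 3.3 of \cite{B-F-M-2009} (valid for $\widetilde{F}_0$-measurable initial data), and invoke Corollary \ref{cor:Aux_1} to obtain $Y=\lim_N Y^{(N)}$ in $L^{\infty}([0,s];L^2(\Omega;W))$. On the other hand, let $\widetilde{X}^{y}$ denote the moderate solution driven by $\widetilde{W}$ starting from $y\in L^2(\Omega;W)$; by construction $\widetilde{X}^{X^x(t)}$ is obtained by the same approximation procedure from the same initial data $X^x(t)$. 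Uniqueness of moderate solutions (Theorem \ref{thm:ExUniContdep}, applied to the filtration $\widetilde{F}$) therefore gives
\[
X^x(t+s)=Y(s)=\widetilde{X}^{X^x(t)}(s),\qquad s\geq 0,\;\text{a.s.}
\]

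With this identification the Markov identity is immediate. Conditioning on $F_t$ and using that $\widetilde{W}$ is independent of $F_t$ while $X^x(t)$ is $F_t$-measurable,
\[
E\bigl[\varphi(X^x(t+s))\mid F_t\bigr]=E\bigl[\varphi(\widetilde{X}^{y}(s))\bigr]\Big|_{y=X^x(t)}=(P_s\varphi)(X^x(t)),
\]
where the middle equality uses the classical "freezing lemma" for conditional expectations and the fact that $\widetilde{X}^y(s)$ has the same law as $X^y(s)$.

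For the Feller property I would argue by continuous dependence. Let $x_n\to x$ in $W$. By part (i) of Theorem \ref{thm:ExUniContdep},
\[
E\bigl[\|X^{x_n}(t)-X^{x}(t)\|_W^2\bigr]\leq \|x_n-x\|_W^2\longrightarrow 0,
\]
so $X^{x_n}(t)\to X^x(t)$ in probability in $W$. For $\varphi\in C_b(W)$ this gives $\varphi(X^{x_n}(t))\to\varphi(X^x(t))$ in probability, and the bounded convergence theorem then yields $(P_t\varphi)(x_n)\to(P_t\varphi)(x)$; boundedness of $P_t\varphi$ is trivial. The main subtle point is really the first step, namely justifying that the shifted process is a moderate solution with respect to the shifted filtration so that pathwise uniqueness in the moderate class applies to an $F_t$-measurable initial condition; once that is settled the rest is routine.
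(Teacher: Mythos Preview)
Your overall strategy---establish the evolution property via uniqueness, then use independence of the shifted Brownian increments from $F_t$ to condition---is the same as the paper's, and your Feller argument is identical. There is, however, a genuine gap in the step where you show that $Y(s)=X^x(t+s)$ is moderate. You write ``invoke Corollary \ref{cor:Aux_1} to obtain $Y=\lim_N Y^{(N)}$'', but Corollary \ref{cor:Aux_1} does not say this: it only guarantees that the sequence $Y^{(N)}$ (built from the truncations $X^{x,(N)}(t)$) is Cauchy and hence converges to \emph{some} limit $Z$, namely the moderate solution with initial datum $X^x(t)$. To conclude $Y=Z$ from what you have so far you would need uniqueness of weak Leray solutions, which is exactly the open problem discussed in the remark following Theorem \ref{thm:ExistenceWeakLeray}. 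So this step is circular.

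The repair is easy: instead of truncating $X^x(t)$, shift the approximating sequence furnished by the moderate structure of $X^x$ itself. Since $X^x$ is moderate there exist $l^2$-valued weak solutions $X^{(M)}$ with $X^{(M)}\to X^x$ in $L^\infty([0,T];L^2(\Omega;W))$; each shift $X^{(M)}(t+\cdot)$ is an $l^2$-valued weak solution driven by $\widetilde{W}$ with $\widetilde{F}_0$-measurable initial condition $X^{(M)}(t)\in L^2(\Omega;l^2)$, and these shifts converge to $Y$. This exhibits $Y$ as moderate, after which Theorem \ref{thm:ExUniContdep} gives $Y=\widetilde{X}^{X^x(t)}$ and your remaining steps go through. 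The paper avoids the issue altogether by first reducing to $x\in l^2$ (where one has the clean $l^2$-uniqueness of Remark \ref{rem:l_2_Uniqueness}), proving the evolution property and the conditional identity for $F_t$-measurable $\eta\in L^2(\Omega;l^2)$ via the standard constant $\to$ simple $\to$ general approximation, and only then passing to $x\in W$ by continuous dependence.
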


\begin{proof}
We have just to prove the identity (\ref{Markov identity}), the other claims
being obvious or classical. Indeed, Feller property follows from part i) of Theorem \ref{thm:ExUniContdep}.
It is enough to show identity (\ref{Markov identity}) when $x\in l^2$. Indeed, general case will follow
from the continuous dependence of moderate solution upon initial condition (part i) of Theorem \ref{thm:ExUniContdep}).

Consider the equation on a generic interval
$\left[  s,t\right]  $ with initial condition $\eta\in L^{2}\left(
\Omega;l^2\right)  $, $F_{s}$-measurable, at time $s$ and call $X^{s,\eta
}\left(  t\right)  $ the solution. Consider the function
\[
Y\left(  t\right)  :=\left\{
\begin{array}
[c]{ccc}%
X^{x}\left(  t\right)   & \text{for} & t\in\left[  0,s\right]  \\
X^{s,X^{x}\left(  s\right)  }\left(  t\right)   & \text{for} & t\geq s.
\end{array}
\right.
\]
Direct substitution into the equations prove that $Y$ is a solution with
initial condition $x$, hence equal to $X^{x}\left(  t\right)  $ also for
$t\geq s$. This proves the evolution property%
\[
X^{s,X^{x}\left(  s\right)  }\left(  t\right)  =X^{x}\left(  t\right)  ,\qquad
t\geq s.
\]
Thus%
\[
E\left[  \varphi\left(  X^{x}\left(  t+s\right)  \right)  |F_{t}\right]
=E\left[  \varphi\left(  X^{t,X^{x}\left(  t\right)  }\left(  t+s\right)
\right)  |F_{t}\right]  .
\]
If we prove that%
\[
E\left[  \varphi\left(  X^{t,\eta}\left(  t+s\right)  \right)  |F_{t}\right]
=\left(  P_{s}\varphi\right)  \left(  \eta\right)
\]
for all $\eta\in L^{2}\left(  \Omega;l^2\right)  $, $F_{t}$-measurable, we are
done. If $\eta=x$, a.s. constant, it is true, by exploiting the fact that the
increments of the Brownian motions $W_{n}$ from $t$ to $t+s$ are independent
of $F_{t}$; and because the dynamics is autonomous. From constant values one
generalizes to $\eta=\sum_{i=1}^{n}x_{i}1_{A_{i}}$, $A_{i}\in F_{t}$;\ indeed,
for such $\eta$, we have%
\[
X^{t,\eta}\left(  t+s\right)  =\sum_{i=1}^{n}X^{t,x_{i}}\left(  t+s\right)
1_{A_{i}}.
\]
Finally we have the identity for all $\eta$ by the continuity result above.
\end{proof}

\section{Invariant measures}

Consider the measures $\mu_{r}$, parametrized by $r\geq0$, formally defined
as
\[
\mu_{r}\left(  dx\right)  =\frac{1}{Z}\exp\left(  -\frac{\sum_{n=1}^{\infty
}x_{n}^{2}}{2r}\right)  dx.
\]
The rigorous definition is: $\mu_{r}$ is the Gauss measure on $l^{2}$, namely
the Gaussian measure on $W$ having covariance equal to identity. For every
function $f$ of the first $n$ coordinates only of $l^{2}$, the measure
$\mu_{r}$ is given by%
\[
\int_{Y}f\left(  x_{1},...,x_{n}\right)  \mu_{r}\left(  dx\right)  =\frac
{1}{Z_{n}}\int_{\mathbb{R}^{n}}f\left(  x_{1},...,x_{n}\right)  \exp\left(
-\frac{\sum_{k=1}^{n}x_{k}^{2}}{2r}\right)  dx_{1}...dx_{n}%
\]
where $Z_{n}=\left(  2\pi r\right)  ^{n/2}$. This formula identifies $\mu_{r}$.

Moreover, for technical reasons, we need
\[
\widetilde{W}=\left\{  \left(  x_{n}\right)  _{n\in\mathbb{N}}:\left\Vert x\right\Vert
_{W}^{2}:=\sum k_{n}^{-4}x_{n}^{2}<\infty\right\}  .
\]
Notice that
\[
\mu_{r}\left(  \widetilde{W}\right)  =1
\]
and the embedding of $W$ in
$\widetilde{W}$ is compact.

\begin{proposition}
For every $r>0$, $\mu_{r}$ is invariant for the Markov semigroup $P_{t}$
defined above on $W$.
\end{proposition}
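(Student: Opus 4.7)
The invariance of $\mu_r$ comes from the fact that, in Stratonovich form, the generator of the system is a sum of squares of rotation vector fields. Setting
\[
V_j = k_j\bigl(x_j\partial_{j+1} - x_{j+1}\partial_j\bigr),
\]
equation \eqref{system} has Stratonovich generator $L = \tfrac12\sum_{j\geq 1} V_j^2$; each $V_j$ generates a rotation in the $(x_j,x_{j+1})$-plane, and the Gaussian product measure $\mu_r$ is rotation-invariant in every coordinate pair. I would first verify the infinitesimal identity $\int L\varphi\,d\mu_r = 0$ for smooth bounded cylinder $\varphi(x)=\varphi(x_1,\ldots,x_m)$. For $j\leq m-1$, $V_j$ is antisymmetric on $L^2(\mu_r)$ because rotations preserve $\mu_r$, so $\int V_j^2\varphi\,d\mu_r = -\int V_j\varphi\cdot V_j 1\,d\mu_r = 0$. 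For $j\geq m+1$, $V_j\varphi = 0$. The boundary term $V_m\varphi = -k_m x_{m+1}\partial_m\varphi$ couples to $x_{m+1}$; expanding
\[
V_m^2\varphi = -k_m^2\, x_m\partial_m\varphi + k_m^2\, x_{m+1}^2\partial_m^2\varphi,
\]
and using both the independence of $x_{m+1}$ under $\mu_r$ (giving the factor $\int x_{m+1}^2\,d\mu_r = r$) and integration by parts in $x_m$ against the one-dimensional Gaussian weight, the two contributions cancel exactly.

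To upgrade this infinitesimal identity to semigroup invariance I would use a truly conservative finite-dimensional truncation, namely the Stratonovich system
\[
dX_n^N = k_{n-1} X_{n-1}^N\circ dW_{n-1} - k_n X_{n+1}^N\circ dW_n,\qquad n=1,\ldots,N,
\]
with $X_0^N=X_{N+1}^N=0$. Its generator is $L^N = \tfrac12\sum_{j=1}^{N-1} V_j^2$ on $\mathbb{R}^N$, and since each $V_j$ is a genuine rotation vector field on $\mathbb{R}^N$, the solution map $x\mapsto X^{x,N}(t)$ is a.s.\ a random isometry. Hence the standard Gaussian $\mu_r^N$ on $\mathbb{R}^N$ is invariant for the associated Markov semigroup $P_t^N$. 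For a bounded cylinder $\varphi$ in $x_1,\ldots,x_m$ and $N\geq m$,
\[
\int P_t^N\varphi\,d\mu_r^N = \int\varphi\,d\mu_r^N = \int\varphi\,d\mu_r.
\]
An argument parallel to Theorem \ref{thm:ExUniContdep} (the same energy estimate with continuous dependence in $W$) shows $X^{x,N}(t)\to X^x(t)$ in $L^2(\Omega;W)$, hence $P_t^N\varphi(x)\to P_t\varphi(x)$, and dominated convergence yields $\int P_t\varphi\,d\mu_r = \int\varphi\,d\mu_r$ for cylinder $\varphi$. The identity extends to all $\varphi\in C_b(W)$ by a monotone-class argument using the tightness of $\mu_r$ on $W$.

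The main obstacle is the Galerkin passage to the limit: the truncation used in Theorem \ref{thm:ExistenceWeakLeray} carries an extra boundary drift $-\tfrac12 k_N^2 X_N^N\,dt$ compared with the conservative truncation above, so one must either reprove existence directly for the conservative Galerkin or compare the two truncations in $W$-norm, controlling the difference solely through the boundary dissipation. A Galerkin-free alternative would be to apply It\^o's formula to $\varphi(X^x(t))$ and integrate in $x$ against $\mu_r$; the difficulty then shifts to a Fubini and dominated-convergence argument for the $x_{m+1}^2\partial_m^2\varphi$ term, using the Leray estimate of Theorem \ref{thm:ExistenceWeakLeray} to uniformly control $E[X_{m+1}(s)^2]$ in $s$.
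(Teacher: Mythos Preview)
Your overall strategy is sound and close in spirit to the paper's. The infinitesimal computation is correct, and you are right that the conservative truncation with generator $\tfrac12\sum_{j=1}^{N-1}V_j^2$ is the one whose flow is a.s.\ a random isometry of $\mathbb{R}^N$, hence preserves $\mu_r^N$ exactly.

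Where the paper differs is in the limit passage. Rather than arguing pointwise convergence $P_t^N\varphi(x)\to P_t\varphi(x)$ for $\mu_r$-a.e.\ $x$ and then integrating, the paper randomizes the initial condition once and for all: it takes an $F_0$-measurable $X^{(0)}$ with law $\mu_r$, forms the Galerkin process $X^N$ with those initial data, and proves that the sequence of laws $\rho_N$ of $X^N$ on path space is tight in $L^2([0,T];\widetilde{W})$, where $\widetilde{W}=\{x:\sum k_n^{-4}x_n^2<\infty\}$ so that $W\hookrightarrow\widetilde{W}$ compactly. Tightness comes from a uniform bound in $L^2(\Omega;L^2([0,T];W))\cap L^2(\Omega;W^{\alpha,2}([0,T];\widetilde{W}))$ for some $\alpha\in(0,\tfrac12)$, combined with the compact embedding of this intersection into $L^2([0,T];\widetilde{W})$ (a Flandoli--G\c{a}tarek lemma). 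The $W^{\alpha,2}$ bound is obtained by decomposing $X^N$ into initial datum, drift integral (controlled in $W^{1,2}(0,T;\widetilde{W})$ since $A^N$ is bounded $W\to\widetilde{W}$), and stochastic integral (controlled in $W^{\alpha,2}(0,T;\widetilde{W})$ via the Hilbert--Schmidt norm of $B^N$). Any subsequential limit must then be the law $\rho$ of the moderate solution.

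This tightness machinery sidesteps the difficulty you correctly flag, namely that $X^{x,N}\to X^x$ in $L^2(\Omega;W)$ is \emph{not} a consequence of Theorem~\ref{thm:ExUniContdep}, since the Galerkin solves a genuinely different equation rather than the full system with a truncated initial condition. On the other hand, the paper's written proof works with the dissipative Galerkin of Theorem~\ref{thm:ExistenceWeakLeray} and is devoted entirely to the tightness step; invariance at the finite-dimensional level is left implicit, and strictly speaking fails for that truncation because of the extra boundary drift $-\tfrac12 k_N^2 X_N^N\,dt$. Your observation that one should use the conservative truncation (or else control the boundary discrepancy) is therefore the missing ingredient in either route. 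In short: your argument for finite-$N$ invariance is cleaner than what the paper records, while the paper supplies the compactness argument on path space that replaces your pointwise-plus-dominated-convergence step.
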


\begin{proof}
It is sufficient to prove
\[
\int_{Y}\left(  P_{t}\varphi\right)  \left(  x\right)  \mu_{r}\left(
dx\right)  =\int_{Y}\varphi\left(  x\right)  \mu_{r}\left(  dx\right)
\]
for all $\varphi$ of the form $\varphi\left(  x\right)  =f\left(
x_{1},...,x_{n}\right)  $, with bounded continuous $f$. We have%
\[
\int_{Y}\left(  P_{t}\varphi\right)  \left(  x\right)  \mu_{r}\left(
dx\right)  =E\left[  \int_{Y}f\left(  X_{1}^{x}\left(  t\right)
,...,X_{n}^{x}\left(  t\right)  \right)  \mu_{r}\left(  dx\right)  \right]  .
\]
The strategy now is the following one. On an enlarged probability space, if
necessary, we can define an $F_{0}$-measurable r.v. $X^{\left(  0\right)  }\in
L^{2}\left(  \Omega;W\right)  $ with law $\mu_{r}$. For every $N>0$ denote by
$X^{N}$ the Galerkin approximations used to prove existence above, with
initial condition $X_{n}^{N}\left(  0\right)  =X_{n}^{\left(  0\right)  }$.
Sequence $X^{N}$ weakly
converges to the moderate solution $X$ having initial condition $X^{\left(
0\right)  }$. Denote by $\rho_{N}$ the law of $X^{N}$ and by $\rho$ the law of
$X$, on $L^{2}\left(  \left[  0,T\right];W\right)  $. We shall
prove that the sequence $\rho_{N}$ is tight in $L^{2}\left(  \left[
0,T\right];\widetilde{W}\right)  $. Then there exists a
subsequence $\rho_{n_{k}}$ weakly convergent to some probability measure on
$L^{2}\left(  \left[  0,T\right];\widetilde{W}\right)  $. Such
measure must be $\rho$.

It is enough to show that the sequence $X^{N}$ of Galerkin approximations is bounded in
$L^2(\Omega,L^2(\left[0,T\right],W))\cap L^2(\Omega,W^{\alpha,2}(\left[0,T\right],\widetilde{W})),\alpha\in (0,1)$. That implies that
laws $\{\rho_N\}_{N=1}^{\infty}$ are bounded in probability
on
\[
L^2(\left[0,T\right],W)\cap W^{\alpha,2}(\left[0,T\right],\widetilde{W}),\alpha\in (0,1).
\]

Since embedding
\[
L^2(\left[0,T\right],W)\cap W^{\alpha,2}(\left[0,T\right],\widetilde{W})\subset L^2(\left[0,T\right],\widetilde{W}),\alpha\in (0,1).
\]
is compact by Theorem 2.1, p. 370 of \cite{FlandoliGatarek_1995}( applied with $B_0=W$, $B=B_1=\widetilde{W}$, $p=2$) we shall conclude that the sequence $\rho_{N}$ is tight in $L^{2}\left(  \left[
0,T\right];\widetilde{W}\right)  $.

Since the sequence $(X^{N})$ is bounded in $L^{\infty}(\left[0,T\right],L^2(\Omega,W))$ it remains to show that the sequence $(X^{N})$ is bounded in $L^2(\Omega,W^{\alpha,2}(\left[0,T\right],\widetilde{W}))$ for some $\alpha\in (0,1)$.

Decompose $X^N$ as
\[
X^N(t)=X^N(0)-\int_0^tA^NX^N(s)ds+\int_0^tB^N(X^N)dW(s)=J_1^N(t)+J_2^N(t)+J_3^N(t)
\]
where
\begin{align*}
(A^Nx)_{n,m}&=-\frac{\delta_{n,m}}{2}(k_{n-1}^2+k_n^2)x_n,\\
(B^Nx)_{n,m}&=k_{n-1}x_{n-1}\delta_{n,m+1}-k_{n}x_{n+1}\delta_{n,m},x\in P_N(W),\,m,n=1,\ldots,N.
\end{align*}
We have
\begin{equation}\label{eqn:Aux_1}
\mathbb{E}|J_N^1|_{W^{1,2}(0,T;W)}^2\leq T\mathbb{E}|X^{(0)}|_{W}^2.
\end{equation}
Since $|A^N|_{\mathcal{L}(W,\widetilde{W})}\leq K=1+\sup\limits_{n}\frac{k_{n-1}^2}{k_n^2}$ we infer that
\begin{equation}\label{eqn:Aux_2}
\mathbb{E}|J_2^N|_{W^{1,2}(0,T;\widetilde{W})}^2\leq C(T,K)\mathbb{E}|X^N|_{L^2(\left[0,T\right],W)}^2\leq C(T,K)\mathbb{E}|X^{(0)}|_{W}^2.
\end{equation}
Fix $\alpha\in (0,\frac{1}{2})$.
By Lemma 2.1, p. 369 of \cite{FlandoliGatarek_1995} we have that
\begin{equation}\label{eqn:Aux_3}
\mathbb{E}|J_2^N|_{W^{\alpha,2}(0,T;\widetilde{W})}^2\leq\mathbb{E}\int_0^T|B^N(X^N)|_{L_{HS}(l^2,\widetilde{W})}^2\,ds
\end{equation}
Notice that
\begin{align}
|B(x)|_{L_{HS}(l^2,\widetilde{W})}^2&=\sum\limits_{n=1}^{\infty}|B(x)(e_n)|_{\widetilde{W}}^2\nonumber\\
&\leq \sum\limits_{n=1}^{\infty} k_n^{-4}k_n^2x_{n+1}^2+
k_{n+1}^{-4}k_n^2x_n^2\leq (K+K^2)|x|_{W}^2,x\in W.\label{eqn:Aux_4}
\end{align}
where $(e_n)_{n=1}^{\infty}$ is ONB in $l^2$.

Combining inequalities \eqref{eqn:Aux_3} and \eqref{eqn:Aux_4} we infer that
\begin{equation}\label{eqn:Aux_5}
\mathbb{E}|J_2^N|_{W^{\alpha,2}(0,T;\widetilde{W})}^2\leq C\mathbb{E}\int_0^T|X^N(s)|_{W}^2\,ds\leq C(T,K,\alpha)\mathbb{E}|X^{(0)}|_{W}^2.
\end{equation}

Hence, inequalities \eqref{eqn:Aux_1}, \eqref{eqn:Aux_2} and \eqref{eqn:Aux_5} imply that for some $\alpha\in(0,\frac{1}{2})$
\begin{equation}\label{eqn:Aux_6}
\mathbb{E}|X^N|_{W^{\alpha,2}(\left[0,T\right],\widetilde{W})}^2\leq C(T,K,\alpha)\mathbb{E}|X^{(0)}|_{W}^2,
\end{equation}
and the result follows.
\end{proof}
\begin{corollary}\label{cor:SemigroupReversibility}
The semigroup $(P_t)_{t\geq0}$ acting on $C_b(W)$ can be extended to $L^p(W,\mu_r)$ for any $p\geq1$.
Generator of the semigroup $(P_t)_{t\geq 0}$ is given by the formula
\[
\mathcal{L}=\frac{1}{2}\sum\limits_{j=1}^{\infty}k_{j}^2D_{j,j+1}^2
\]
with $D_{j,j+1}=x_j\partial_{x_{j+1}}-x_{j+1}\partial_{x_j}, j\in\mathbb{N}$.
\end{corollary}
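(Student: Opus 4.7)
The plan is to prove the two assertions in turn. For the $L^p$-extension, combine invariance of $\mu_r$ (the preceding proposition) with Jensen's inequality applied to the law of $X^x(t)$: for $\varphi\in C_b(W)$ and $p\in[1,\infty)$ one has $|P_t\varphi|^p\leq P_t|\varphi|^p$ pointwise, hence
$$\int_W |P_t\varphi|^p\,d\mu_r \leq \int_W P_t|\varphi|^p\,d\mu_r = \int_W |\varphi|^p\,d\mu_r.$$
This $L^p$-contractivity, combined with density of $C_b(W)$ in $L^p(W,\mu_r)$ (since $\mu_r$ is a Radon probability on the separable Banach space $W$), permits extension by continuity. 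Strong continuity on $L^p(\mu_r)$ is checked first on cylindrical continuous functions, using the Feller property from Proposition \ref{prop:Markovness} and bounded convergence, and then extended by density.

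For the generator, work on the core of cylindrical smooth functions $\varphi(x)=f(x_1,\dots,x_N)$ with $f\in C_c^\infty(\mathbb{R}^N)$. For $M>N$ the Galerkin system is a finite-dimensional Stratonovich SDE whose vector field attached to $W_j$ (for $j=1,\dots,M-1$) is precisely $V_j=k_j(x_j\partial_{x_{j+1}}-x_{j+1}\partial_{x_j})=k_jD_{j,j+1}$. The Stratonovich chain rule then gives
$$\varphi(X^M(t))-\varphi(x)=\sum_{j=1}^{M-1}\int_0^t k_jD_{j,j+1}\varphi(X^M(s))\,dW_j(s)+\frac{1}{2}\sum_{j=1}^{M-1}\int_0^t k_j^2D_{j,j+1}^2\varphi(X^M(s))\,ds,$$
and since $D_{j,j+1}\varphi\equiv 0$ for $j>N$ both sums collapse to $j=1,\dots,N$. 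Taking expectation kills the martingale part and yields, with $\mathcal{L}\varphi:=\tfrac12\sum_{j=1}^N k_j^2 D_{j,j+1}^2\varphi$,
$$\mathbb{E}\varphi(X^M(t))-\varphi(x)=\int_0^t\mathbb{E}\bigl[(\mathcal{L}\varphi)(X^M(s))\bigr]\,ds.$$

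The final step is to pass to $M\to\infty$. By Corollary \ref{cor:Aux_1} the Galerkin sequence (with initial data truncated to the first $N$ coordinates) converges to the moderate solution in $L^\infty([0,T];L^2(\Omega;W))$, so each of the first $N+1$ coordinates converges in $L^2(\Omega)$; since $\mathcal{L}\varphi$ is a polynomial in $x_{N-1},x_N,x_{N+1}$ multiplied by compactly supported derivatives of $f$, the uniform second-moment bound on $X^M_{N+1}(s)$ supplies the dominating function needed for the interchange of limit and expectation. This yields
$$P_t\varphi(x)-\varphi(x)=\int_0^t P_s(\mathcal{L}\varphi)(x)\,ds,$$
and dividing by $t$ and letting $t\downarrow 0$ in $L^p(\mu_r)$, with strong continuity from the first paragraph, identifies $\mathcal{L}$ as the generator on the dense subspace of cylindrical smooth functions, which is enough for the corollary. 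The principal obstacle will be this $M\to\infty$ interchange together with the fact that $\mathcal{L}\varphi$ is unbounded in $x_{N+1}$; cylindricality localises the computation to finitely many coordinates, so only the uniform energy estimate for the Galerkin approximations is required, with no infinite-dimensional tail control. Antisymmetry of each $D_{j,j+1}$ in $L^2(\mu_r)$, coming from the rotational invariance of $\mu_r$ in the $(x_j,x_{j+1})$-plane, makes $\mathcal{L}$ symmetric and nonpositive on this core, giving an a posteriori consistency check with both the invariance of $\mu_r$ and the reversibility label of the corollary.
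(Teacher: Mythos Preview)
Your proposal is correct and follows the same route as the paper, whose proof is the single line ``It follows from It\^{o} formula.'' You have simply supplied the details the authors omit: invariance plus Jensen for the $L^p$-extension, and the Stratonovich/It\^{o} chain rule on cylindrical test functions for the identification of $\mathcal{L}$.

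One minor remark: the detour through the Galerkin system and the $M\to\infty$ passage is not needed. For a cylindrical $\varphi(x)=f(x_1,\dots,x_N)$ with $f\in C_c^\infty$, the finite-dimensional It\^{o} formula applies directly to $f(X_1^x(t),\dots,X_N^x(t))$ for the moderate solution itself, since each component solves the explicit It\^{o} equation stated in the definition of weak solution and the integrands $\partial_n f\cdot k_n X_{n+1}^x$ lie in $L^2(\Omega\times[0,T])$ by the Leray bound. This yields $P_t\varphi(x)-\varphi(x)=\int_0^t P_s(\mathcal{L}\varphi)(x)\,ds$ without any limiting argument. Your Galerkin route is also valid; the phrase ``dominating function'' is slightly loose (it is not classical dominated convergence but rather the fact that $X_{N+1}^M\to X_{N+1}$ in $L^2(\Omega)$ together with the uniform $L^2$ bound forces $(X_{N+1}^M)^2\to X_{N+1}^2$ in $L^1$ via Cauchy--Schwarz), but the conclusion stands.
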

\begin{proof}
It follows from It\^o formula.
\end{proof}

\section{Symmetry of the generator in the Sobolev spaces}
Let
\[
L=\sum\limits_{i=1}^{\infty}(\frac{\partial^2}{\partial x_i^2}-x_i\frac{\partial}{\partial x_i})
\]
be Ornstein-Uhlenbeck operator and
\[
\mathcal{H}^n=\left\{f\in L^2(W,\mu_r):|f|_{\mathcal{H}^n}^2=|f|_{L^2(W,\mu_r)}^2+((-L)^nf,f)_{L^2(W,\mu_r)}<\infty
\right\},n\in\mathbb{Z},
\]
\[
\mathcal{C}=\left\{\phi: W\to\mathbb{R}, \phi(x)=f(x_1,\ldots,x_n), f\in C^4(\mathbb{R}^n,\mathbb{R}),n\in\mathbb{N}\right\}.
\]
We have
\[
[D_{i,i+1},L]\phi=0,\phi\in\mathcal{C}.
\]
Consequently,
\[
[\mathcal{L},L]\phi=0,\phi\in\mathcal{C},
\]
and
\begin{proposition}
For all $f,g\in\mathcal{C}$ we have
\[
(f,\mathcal{L}g)_{\mathcal{H}^n}=(g,\mathcal{L}f)_{\mathcal{H}^n}=-\sum\limits_{l=1}^{\infty}k_l^2(D_{l,l+1}f,D_{l,l+1}g)_{\mathcal{H}^n},n\in\mathbb{Z}.
\]
\end{proposition}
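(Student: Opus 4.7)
The strategy is to obtain the identity in two stages. First establish the ``integration by parts'' formula on the base Hilbert space $L^2(W,\mu_r)$ using the antisymmetry of each vector field $D_{l,l+1}$, and then lift the result to $\mathcal{H}^n$ by combining the already-noted commutation $[\mathcal{L},L]=0$ with the self-adjointness of $(-L)^n$ on $L^2(W,\mu_r)$.

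The key ingredient for the $L^2$ step is that each $D_{l,l+1}=x_l\partial_{x_{l+1}}-x_{l+1}\partial_{x_l}$ is the infinitesimal generator of rigid rotations $R_\theta$ in the $(x_l,x_{l+1})$-plane, and $\mu_r$, being a centered isotropic Gaussian, is invariant under these rotations. Applying $R_\theta$ to $fg$ and differentiating at $\theta=0$ yields $\int D_{l,l+1}(fg)\,d\mu_r=0$, which is exactly the antisymmetry $(D_{l,l+1}f,g)_{L^2(\mu_r)}=-(f,D_{l,l+1}g)_{L^2(\mu_r)}$. Iterating gives
$$(f,D_{l,l+1}^2 g)_{L^2(\mu_r)}=-(D_{l,l+1}f,D_{l,l+1}g)_{L^2(\mu_r)},$$
and summing with weights $\tfrac12 k_l^2$ (noting that the sum is in fact finite because any $f,g\in\mathcal{C}$ depend on only finitely many coordinates, so $D_{l,l+1}f=0$ for all but finitely many $l$) yields the $L^2(\mu_r)$ version of the identity, manifestly symmetric in $f,g$.

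To promote the identity to the $\mathcal{H}^n$ inner product, expand
$$(f,\mathcal{L}g)_{\mathcal{H}^n}=(f,\mathcal{L}g)_{L^2(\mu_r)}+((-L)^n f,\mathcal{L}g)_{L^2(\mu_r)}.$$
Self-adjointness of $(-L)^n$ together with the commutation $[\mathcal{L},L]=0$ stated just before the proposition rewrites the second summand as $(f,\mathcal{L}(-L)^n g)_{L^2(\mu_r)}$. Applying the $L^2$ identity to the pair $(f,(-L)^n g)$ and then moving $(-L)^n$ back across $D_{l,l+1}$ using $[D_{l,l+1},L]=0$ produces $-\sum_l k_l^2((-L)^nD_{l,l+1}f,D_{l,l+1}g)_{L^2(\mu_r)}$. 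Adding this to the $L^2$ contribution exactly reconstitutes $-\sum_l k_l^2 (D_{l,l+1}f,D_{l,l+1}g)_{\mathcal{H}^n}$, which is symmetric in $f$ and $g$.

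The only genuine obstacle is bookkeeping: one must confirm that the manipulations are legitimate on the core $\mathcal{C}$, i.e.\ that $\mathcal{L}f$, $D_{l,l+1}f$, and $(-L)^n f$ all remain within a space where the inner products and finite sums used above make sense. This is immediate because $D_{l,l+1}$ increases the set of relevant coordinates by at most one index, so $\mathcal{L}$, $L$ and their powers all map $\mathcal{C}$ into itself, and every object in the proof is a smooth function of finitely many Gaussian variables with all moments finite.
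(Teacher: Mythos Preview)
Your argument is correct and follows exactly the route the paper indicates: the paper does not give a separate proof but simply records the commutation relations $[D_{i,i+1},L]=0$ and $[\mathcal{L},L]=0$ immediately before stating the proposition, so the intended proof is precisely the one you wrote out---antisymmetry of each $D_{l,l+1}$ in $L^2(\mu_r)$ to obtain the $n=0$ case, then the commutation with $L$ (hence with $(-L)^n$) to transport the identity to the $\mathcal{H}^n$ inner product.

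One small caveat on your final bookkeeping paragraph: the class $\mathcal{C}$ is defined with $C^4$ regularity only, so your claim that ``$\mathcal{L}$, $L$ and their powers all map $\mathcal{C}$ into itself'' is not literally true---$L$ drops two orders of differentiability, so $(-L)^n g$ need not remain in $\mathcal{C}$ for $n\geq 2$. This is really a wrinkle in the paper's own formulation (the proposition is asserted for all $n\in\mathbb{Z}$ with $f,g\in\mathcal{C}$); your argument goes through verbatim once one works on the smaller core of $C^\infty$ cylinder functions, which is still dense and stable under all the operators involved.
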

Fix $n\in\mathbb{N}\cup{0}$.
\begin{corollary}\label{cor:Contractivity}
The operator $\mathcal{L}$ is closable in $\mathcal{H}^n$ and its closure has bounded from above self-adjoint extension, which we continue to denote by the same symbol
$\mathcal{L}$. Moreover, the
self-adjoint extension $\mathcal{L}$ generates a strongly continuous contraction semigroup $T_t=e^{t\mathcal{L}}:\mathcal{H}^n\to \mathcal{H}^n$ such that $T_t=P_t|_{\mathcal{H}^n}$.
\end{corollary}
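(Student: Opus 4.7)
The plan is first to verify, on the dense domain $\mathcal{C}\subset\mathcal{H}^n$, that $\mathcal{L}$ is symmetric and non-positive, then to invoke the Friedrichs construction together with the spectral theorem to produce a self-adjoint, contraction-semigroup-generating extension, and finally to identify the resulting semigroup with $P_t|_{\mathcal{H}^n}$ by means of the commutation $[\mathcal{L},L]\phi=0$ and Corollary~\ref{cor:SemigroupReversibility}.

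The first step is largely a bookkeeping consequence of the preceding Proposition, which supplies, for all $f,g\in\mathcal{C}$,
\[
(f,\mathcal{L}g)_{\mathcal{H}^n}=(g,\mathcal{L}f)_{\mathcal{H}^n},\qquad (f,\mathcal{L}f)_{\mathcal{H}^n}=-\sum_{l=1}^{\infty}k_l^2\|D_{l,l+1}f\|_{\mathcal{H}^n}^2\leq 0.
\]
Since $\mathcal{C}$ is dense in $\mathcal{H}^n$, the operator $\mathcal{L}$ is densely defined, symmetric and bounded above by $0$. A standard result in functional analysis (e.g.\ Reed--Simon~II, Thm.~X.23) then yields closability together with a Friedrichs self-adjoint extension, still $\leq 0$, which I continue to denote by $\mathcal{L}$. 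The spectral theorem (or Lumer--Phillips applied to $\mathcal{L}$) now produces a strongly continuous contraction semigroup $T_t=e^{t\mathcal{L}}$ on $\mathcal{H}^n$.

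The main task is the identification $T_t=P_t|_{\mathcal{H}^n}$. For $n=0$ this is exactly Corollary~\ref{cor:SemigroupReversibility}. For $n\geq 1$, I plan to exploit the commutation $[\mathcal{L},L]\phi=0$ on $\mathcal{C}$: since $\mathcal{C}$ is a common core for the $L^2(W,\mu_r)$-generators of $P_t$ (read off from the Galerkin approximations of Proposition~\ref{prop:Markovness}) and of the Ornstein--Uhlenbeck semigroup $e^{sL}$, the two generators commute, and hence so do the semigroups $P_t$ and $e^{sL}$ on $L^2(W,\mu_r)$. Functional calculus for the self-adjoint operator $L$ then gives $(-L)^{n/2}P_t\phi=P_t(-L)^{n/2}\phi$ for $\phi\in\mathcal{C}$, and contractivity of $P_t$ on $L^2(W,\mu_r)$ delivers $\|P_t\phi\|_{\mathcal{H}^n}\leq\|\phi\|_{\mathcal{H}^n}$, so $P_t$ extends to a $C_0$-contraction on $\mathcal{H}^n$. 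An application of It\^o's formula, together with dominated convergence in the $\mathcal{H}^n$-norm, yields $t^{-1}(P_t\phi-\phi)\to\mathcal{L}\phi$ in $\mathcal{H}^n$ for $\phi\in\mathcal{C}$, so $\mathcal{L}$ coincides on the core $\mathcal{C}$ with the $\mathcal{H}^n$-generator of $P_t|_{\mathcal{H}^n}$. Uniqueness of the $C_0$-semigroup associated to a self-adjoint operator then forces $T_t=P_t|_{\mathcal{H}^n}$.

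The hardest point is the rigorous promotion of the formal identity $[\mathcal{L},L]\phi=0$ on $\mathcal{C}$ to commutativity of the semigroups $P_t$ and $e^{sL}$ on $L^2(W,\mu_r)$; once this is established, the rest of the argument is routine. This step requires in particular that $\mathcal{C}$ be a common core for the two generators, which is classical for $e^{sL}$ and which for $P_t$ follows from the invariance of $\mathcal{C}$ under the finite-dimensional Galerkin dynamics and the $L^2(\Omega;W)$ convergence already established in Section~2.
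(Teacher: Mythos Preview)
The paper states this corollary without proof, treating it as an immediate consequence of the preceding Proposition (symmetry and non-positivity of $\mathcal{L}$ on $\mathcal{C}$ with respect to the $\mathcal{H}^n$ inner product). Your proposal is therefore not so much a comparison against the paper's argument as a filling-in of details the paper leaves implicit, and on the whole it does this correctly: the Friedrichs extension from a densely defined, symmetric, non-positive operator is standard, as is the contraction semigroup via the spectral theorem.

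Your identification step $T_t=P_t|_{\mathcal{H}^n}$ is the only place requiring care, and you rightly flag it. One point deserves tightening: you justify that $\mathcal{C}$ is a core for the $L^2(\mu_r)$-generator of $P_t$ by appealing to ``invariance of $\mathcal{C}$ under the finite-dimensional Galerkin dynamics''. The Galerkin semigroup $P_t^N$ does preserve cylinder functions in the first $N$ variables, but the full $P_t$ does not preserve $\mathcal{C}$, so Nelson's invariance criterion is not directly available. A cleaner route is: once you know (Corollary~\ref{cor:SemigroupReversibility}) that $P_t$ is a symmetric Markov semigroup on $L^2(\mu_r)$ with generator extending $\mathcal{L}|_{\mathcal{C}}$, its generator is automatically a non-positive self-adjoint extension of $\mathcal{L}|_{\mathcal{C}}$; since the Friedrichs extension is the \emph{largest} (in the quadratic-form sense) such extension and both give rise to Dirichlet forms with the same core expression $\sum_l k_l^2\|D_{l,l+1}f\|^2$, they coincide. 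The commutation $[\mathcal{L},L]\phi=0$ on $\mathcal{C}$ then transfers contractivity to $\mathcal{H}^n$ exactly as you describe. With this adjustment your argument is complete and goes well beyond what the paper itself supplies.
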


\section{Ergodicity}

Define
\begin{eqnarray}\label{eqn-A_r}
\mathcal{A}_r(f)&=&\sum\limits_{n=1}^{\infty}|\partial_n f|_{L^2(W,\mu_r)}^2=(-Lf,f)_{L^2(W,\mu_r)},
\\
\label{eqn-nu}
\nu&=&\sum\limits_{n=1}^{\infty}\frac{n}{k_n^2},\,(\mbox{where } k_n=\lambda^n, \lambda>1).
\end{eqnarray}
Let us remind the reader that $P_t:L^2(W,\mu_r)\to L^2(W,\mu_r),t\geq 0$ is the semigroup with the infinitesimal generator
\[
\mathcal{L}=\frac{1}{2}\sum\limits_{j=1}^{\infty}k_{j}^2D_{j,j+1}^2.
\]
Its existence, construction and properties has been discussed above, see the definition \ref{def:Semigroup}, the proposition \ref{prop:Markovness} and the corollaries \ref{cor:SemigroupReversibility} and \ref{cor:Contractivity}.
\begin{theorem}\label{thm:ExponentialConv}
There exist $C=C(\lambda)>0$ such that for any $f\in \mathcal{H}^1$ and $t\geq 0$
\begin{equation}\label{ExponentialConv_1}
\mathcal{A}_r(P_tf)\leq C\mathcal{A}_r(f)e^{-\frac{t}{\nu}},f\in\mathcal{H}^1.
\end{equation}
\end{theorem}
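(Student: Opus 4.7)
The plan is to use the commutation $[L,\mathcal{L}]=0$ established in the preceding section to convert the Sobolev-norm decay of $P_t$ into an $L^2$-norm decay on mean-zero functions, and then to establish the latter via a Hardy-type Poincar\'e inequality with constant $2\nu$. Since $L$ and $\mathcal{L}$ are self-adjoint on $L^2(W,\mu_r)$ and commute on $\mathcal{C}$, their functional calculi commute, and in particular $(-L)^{1/2}$ commutes with $P_t=e^{t\mathcal{L}}$. Given $f\in\mathcal{H}^1$, set $g:=(-L)^{1/2}f\in L^2(W,\mu_r)$; then $g$ is orthogonal to the constants, $\|g\|^2=(-Lf,f)=\mathcal{A}_r(f)$, and
\[
\mathcal{A}_r(P_tf)=\|(-L)^{1/2}P_tf\|^2=\|P_tg\|_{L^2(\mu_r)}^2.
\]
Thus the theorem reduces to proving the $L^2$-contraction $\|P_tg\|^2\leq C\|g\|^2 e^{-t/\nu}$ for every mean-zero $g\in L^2(W,\mu_r)$, which by the spectral theorem for the self-adjoint semigroup $(P_t)$ is in turn equivalent to the Poincar\'e-type inequality
\[
\|g\|^2\leq 2\nu\,(-\mathcal{L}g,g)=\nu\sum_{j\geq 1}k_j^2\|D_{j,j+1}g\|^2,
\]
valid for all mean-zero $g$ in the domain of $\mathcal{L}$.

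For the Poincar\'e inequality I would decompose $L^2(W,\mu_r)=\bigoplus_{d\geq 0}\mathcal{H}_d$ into Wiener chaoses; because $\mathcal{L}$ preserves each chaos $\mathcal{H}_d$ (thanks to $[L,\mathcal{L}]=0$), it is enough to check the inequality on every $\mathcal{H}_d$ with $d\geq 1$ separately. On $\mathcal{H}_1=\overline{\mathrm{span}}\{x_n\}$ a direct computation gives $\mathcal{L}x_n=-\tfrac{1}{2}(k_n^2+k_{n-1}^2)x_n$, so the smallest eigenvalue of $-\mathcal{L}|_{\mathcal{H}_1}$ is $\lambda^2/2\geq 1/(2\nu)$. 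The key analytical input for the higher chaoses is the discrete Hardy-type inequality
\[
\sum_{n\geq 1}s_n^2\leq \nu\sum_{n\geq 1}k_n^2(s_n-s_{n+1})^2,\qquad s\in\ell^2(\mathbb{N}),
\]
obtained by writing $s_n=\sum_{m\geq n}(s_m-s_{m+1})$, applying Cauchy--Schwarz with weights $k_m^{-2}$, and interchanging the order of summation so that the counting identity $\sum_{n\geq 1}\sum_{m\geq n}k_m^{-2}=\sum_m m/k_m^2=\nu$ produces exactly the constant $\nu$.

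On $\mathcal{H}_2$, $-\mathcal{L}$ stabilises the ``diagonal'' subspace $\overline{\mathrm{span}}\{x_n^2-r\}$ and acts there as the tridiagonal operator whose Dirichlet form is $\sum_n k_n^2(s_n-s_{n+1})^2$, so the Hardy inequality yields precisely the required lower bound $1/\nu$. Off-diagonal elements $x_nx_m$ with $|n-m|\geq 2$ are eigenvectors of $-\mathcal{L}$ with eigenvalue at least $\lambda^2/2$, and the adjacent case $m=n+1$ is handled by the same Hardy-type bound together with the mixing identity $D_{n,n+1}(x_nx_{n+1})=x_n^2-x_{n+1}^2$. The main obstacle is to propagate the inequality uniformly to all chaoses $\mathcal{H}_d$, $d\geq 3$, where $-\mathcal{L}$ no longer splits into clean ``diagonal'' and ``off-diagonal'' sectors because of the carr\'e-du-champ coupling in the Leibniz identity $\mathcal{L}(uv)=u\mathcal{L}v+v\mathcal{L}u+\sum_j k_j^2(D_{j,j+1}u)(D_{j,j+1}v)$; this is treated by Cauchy--Schwarz combined with the Hardy inequality applied to the $k_j$-weighted gradients, run as an induction on the chaos degree that preserves the constant $1/\nu$ up to an absolute factor absorbed into the final $C(\lambda)$.
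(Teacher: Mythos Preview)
Your reduction via $[L,\mathcal{L}]=0$ is correct and illuminating: with $g=(-L)^{1/2}f$ one has $\mathcal{A}_r(P_tf)=\|P_tg\|_{L^2(\mu_r)}^2$, and for a self-adjoint semigroup the bound $\|P_tg\|^2\leq Ce^{-t/\nu}\|g\|^2$ on mean-zero $g$ is equivalent to the spectral gap $\mathrm{gap}(-\mathcal{L})\geq 1/(2\nu)$, i.e.\ to the Poincar\'e inequality $\|g\|^2\leq 2\nu\,(-\mathcal{L}g,g)$ with $C=1$. The paper does \emph{not} proceed this way: it computes the evolution of $F(i,t)=\mu_r|\partial_i P_tf|^2$ via the commutator $[\partial_i,\mathcal{L}]$, obtains the componentwise differential inequality $F(t)\leq F(0)+\int_0^t\triangle^k F(s)\,ds$ with the tridiagonal operator $\triangle^k$ on sequences, and then invokes an $\ell^1$-decay estimate $\sum_i(e^{t\triangle^k}F(0))(i)\leq Ce^{-t/\nu}\sum_iF(i,0)$ established in a companion paper. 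In the present paper the Poincar\'e inequality is stated only as a \emph{corollary} of the theorem, not as an ingredient.

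Your attempted direct proof of the Poincar\'e inequality has a genuine gap. First, a conceptual error: in a Poincar\'e inequality the constant governs the \emph{exponential rate}, not the prefactor. If your induction over chaoses yields a gap $c/\nu$ with $c<\tfrac12$, the conclusion is $\|P_tg\|^2\leq e^{-2ct/\nu}\|g\|^2$, which is not $\leq Ce^{-t/\nu}$ for any finite $C$; there is nothing to ``absorb into $C(\lambda)$''. Second, the induction itself is not carried out. Your Hardy inequality is correct and disposes of the diagonal sector of $\mathcal{H}_2$ (indeed with gap $\geq 1/\nu$), and the off-diagonal monomials in $\mathcal{H}_2$ are genuine eigenvectors, but for $d\geq 3$ the carr\'e-du-champ term mixes sectors and you give no mechanism preventing the gap from shrinking with $d$. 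The paper's route sidesteps this entirely: the $[\partial_i,\mathcal{L}]$ calculation collapses the infinite-dimensional problem to the single tridiagonal operator $\triangle^k$ on $\ell^1$, uniformly in $f$, so no chaos-by-chaos analysis is needed.
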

\begin{proof}

It is enough to show \eqref{ExponentialConv_1} for $f\in C_b^4(W)$. Indeed, $C_b^4(W)$ is dense in $\mathcal{H}^1$ and
$(P_t)_{t\geq
0}$ is a contraction on $\mathcal{H}^1$ by \ref{cor:Contractivity}.

Denote $f_t=P_tf$ for $t\geq 0$.
For $i\in\mathbb{N}$, we can calculate that
\begin{align}
|\partial_i f_t|^2-P_t|\partial_i f|^2&=\int_0^t\frac{d}{ds}P_{t-s}|\partial_i f_s|^2ds\label{eqn:AuxEquation-1}\nonumber\\
&=\int_0^tP_{t-s}(-\mathcal{L}(|\partial_i f_s|^2)+2\partial_i f_s\mathcal{L}\partial_i f_s+2\partial_i
f_s[\partial_i,\mathcal{L}]f_s)ds\nonumber\\
&=\int_0^tP_{t-s}\Big(-\sum\limits_{m\in\mathbb{N}}k_m^2|D_{m,m+1}(\partial_i f_s)|^2\nonumber\\
& \quad +\partial_i f_s(-(k_{i}^2+k_{i-1}^2)\partial_i f_s+2k_{i-1}^2D_{i,i-1}\partial_{i-1}f_s+2k_i^2D_{i,i+1}\partial_{i+1}f_s)\Big)ds .
\end{align}
Integrating \eqref{eqn:AuxEquation-1} with respect to the invariant measure
$\mu_r$ yields
\begin{align}
\mu_r|\partial_i f_t|^2&-\mu_r|\partial_i f|^2 = \int_0^t\Big(-\sum\limits_{m\in\mathbb{N}}k_m^2|D_{m,m+1}(\partial_i f_s)|^2\nonumber\\
&- (k_{i}^2+k_{i-1}^2)\mu_r|\partial_i f_s|^2+2k_{i-1}^2\mu_r(\partial_i f_sD_{i,i-1}\partial_{i-1}f_s)\nonumber\\
&+ 2k_{i}^2\mu_r(\partial_i f_sD_{i,i+1}\partial_{i+1}f_s)\Big)ds.
\end{align}
Notice that the operators $D_{i,j},i,j\in \mathbb{N}$, are antisymmetric in
$L^2(\mu_r)$. Therefore
\begin{align}
\mu_r|\partial_i f_t|^2&-\mu_r|\partial_i f|^2 = \int_0^t\Big(-\sum\limits_{ m\in\mathbb{N}}k_m^2\mu_r|D_{m,m+1}(\partial_i f_s)|^2\nonumber\\
&- (k_{i}^2+k_{i-1}^2)\mu_r|\partial_i f_s|^2-2k_{i-1}^2\mu_r(D_{i,i-1}(\partial_i f_s)\partial_{i-1}f_s)\nonumber\\
&- 2k_{i}^2\mu_r(D_{i,i+1}(\partial_i f_s)\partial_{i+1}f_s)\Big)ds.
\end{align}
Hence, by Young's inequality we deduce that
\begin{align}
\mu_r|\partial_i f_t|^2&-\mu_r|\partial_i f|^2 \leq \int_0^t\Big(-\sum\limits_{m\in\mathbb{N}}k_m^2\mu_r|D_{m,m+1}(\partial_i f_s)|^2\nonumber\\
&- (k_{i}^2+k_{i-1}^2)\mu_r|\partial_i f_s|^2+k_{i-1}^2\mu_r|D_{i,i-1}\partial_i f_s|^2+k_{i-1}^2\mu_r|\partial_{i-1}f_s|^2\nonumber\\
&+k_{i}^2\mu_r|D_{i,i+1}\partial_i f_s|^2+k_{i}^2\mu_r|\partial_{i+1}f_s)|^2\Big)ds\leq\nonumber\\
&\leq\int_0^t\Big(-\sum\limits_{m\neq i,i-1}k_m^2\mu_r|D_{m,m+1}(\partial_i f_s)|^2\nonumber\\
&- (k_{i}^2+k_{i-1}^2)\mu_r|\partial_i f_s|^2+k_{i-1}^2\mu_r|\partial_{i-1}f_s|^2+k_i^2\mu_r|\partial_{i+1}f_s)|^2\Big)ds.\label{ComparBound_1}
\end{align}
Let $\triangle^{k}$  denote the operator on $\mathbb{R}^{\mathbb{N}}$ given by
\begin{equation}\label{eqn:LaplOperatorDef}
\triangle^{k}f(i)=k_i^2(f(i+1)-f(i))+k_{i-1}^2(f(i-1)-f(i)),i\in\mathbb{N},f:\mathbb{N}\to\mathbb{R},k_0=0,
\end{equation}
and set $F(i,t)=\mu_r|\partial_i(P_tf)|^2$ for $t\geq 0, i\in \mathbb{N}$.
Then we can rewrite \eqref{ComparBound_1} as
\begin{equation}
F(t)\leq
F(0)+\int\limits_0^t\triangle^{k} F(s)\,ds,\qquad t\in[0,\infty).\label{ComparBound_2}
\end{equation}
Hence, by the positivity of the semigroup $(e^{t\triangle^{k}})_{t\geq 0}$, and Duhamel's principle, we can conclude that
\begin{equation}
F(t)\leq G(t):=e^{t\triangle^{k}} F(0)\label{ComparBound_3}
\end{equation}
for $t\in[0, \infty)$.
It has been shown in \cite{B-F-M-2010} that there exist $C=C(\{k_n\}_{n=1}^{\infty})>0$ such that
\begin{equation}\label{ComparBound_4}
\sum\limits_{i}G(i,t)\leq Ce^{-\frac{t}{\nu}}\sum\limits_{i}G(i,0),t\geq 0\dela{,\nu=\sum\limits_{n=1}^{\infty}\frac{n}{k_nq
^2}}.
\end{equation}
where $\nu$ has been defined in\eqref{eqn-nu}.
Now the result follows from inequalities \eqref{ComparBound_3} and \eqref{ComparBound_4}.
\end{proof}
\begin{corollary}
\[
\mu_r(P_tf-\mu_rf)^2\leq C\mathcal{A}_r(f)e^{-\frac{t}{\nu}},f\in\mathcal{H}^1.
\]
\end{corollary}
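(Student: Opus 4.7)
The plan is to combine the invariance of $\mu_r$ under $(P_t)_{t\ge0}$ with the Gaussian Poincar\'e inequality and the exponential decay of the Dirichlet form $\mathcal A_r$ along the semigroup, the latter being exactly the content of Theorem \ref{thm:ExponentialConv}.

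First, since $\mu_r$ is an invariant measure for $P_t$ (proved in the previous section), one has $\int_W P_t f\,d\mu_r=\int_W f\,d\mu_r=\mu_r f$, so that
\[
\int_W(P_tf-\mu_r f)^2\,d\mu_r=\operatorname{Var}_{\mu_r}(P_tf).
\]
Second, I would apply the Poincar\'e inequality for the Gaussian product measure $\mu_r$: there exists $c_r>0$ such that, for every $g$ for which the right-hand side is finite,
\[
\operatorname{Var}_{\mu_r}(g)\le c_r\sum_{n=1}^{\infty}\int_W|\partial_n g|^2\,d\mu_r=c_r\,\mathcal A_r(g).
\]
This is standard: it follows from tensorization of the one-dimensional Gaussian Poincar\'e inequality, extended to the infinite-dimensional case by taking an increasing limit over cylinder functions, which are dense in $\mathcal H^1$. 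By Corollary \ref{cor:Contractivity}, $(P_t)_{t\ge0}$ is a contraction on $\mathcal H^1$, so for $f\in\mathcal H^1$ one has $P_tf\in\mathcal H^1$ and hence $\mathcal A_r(P_tf)<\infty$; the Poincar\'e inequality therefore applies to $g=P_tf$.

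Third, I would plug in Theorem \ref{thm:ExponentialConv}, which provides the exponential decay
\[
\mathcal A_r(P_tf)\le C\,\mathcal A_r(f)\,e^{-t/\nu}.
\]
Combining the last three displays yields
\[
\int_W(P_tf-\mu_r f)^2\,d\mu_r\le c_r\,C\,\mathcal A_r(f)\,e^{-t/\nu},
\]
and the constant $c_rC$ can be renamed $C$ (the new $C$ depending on $\lambda$ and $r$).

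The only nontrivial step is the Poincar\'e inequality for the infinite-dimensional Gaussian $\mu_r$; this is a routine consequence of tensorization plus an approximation argument, so no real obstacle is expected. The whole corollary is essentially a one-line deduction from the Poincar\'e inequality together with Theorem \ref{thm:ExponentialConv}: the Dirichlet form controls the variance, and the Dirichlet form itself decays exponentially along the semigroup.
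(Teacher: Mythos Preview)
Your proof is correct and follows exactly the same approach as the paper: apply the Gaussian Poincar\'e inequality to $P_tf$ and then invoke Theorem~\ref{thm:ExponentialConv}. The paper's own proof is in fact just a one-line reference to the Poincar\'e inequality for Gaussian measures, so your write-up is more detailed but substantively identical.
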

\begin{proof}
The proof of this result  follows  immediately from the Poincar\'e inequality for the Gaussian measure $\mu_r$, see, for instance,  theorem 5.5.1 , p. 226 in \cite{Bogachev}. 
\end{proof}

Define
\[
\overline{\mathcal{H}}^1=\{f\in L^2(\mu_r)|\int fd\mu_r=0,||f||_{\overline{\mathcal{H}}^1}^2=\mathcal{A}_r(f)<\infty\}.
\]
Let $D_{\overline{\mathcal{H}}^1}(\mathcal{L})$ domain of operator $\mathcal{L}$ in $\overline{\mathcal{H}}^1$.
\begin{corollary}[Poincare inequality in $\overline{\mathcal{H}}^1$]\label{cor:PoincareIneq_1}
There exists $C>0$ such that
\[
||f||_{\overline{\mathcal{H}}^1}^2\leq C <-\mathcal{L}f,f>_{\overline{\mathcal{H}}^1}, f\in D_{\overline{\mathcal{H}}^1}(\mathcal{L}).
\]
\end{corollary}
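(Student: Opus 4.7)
The plan is to derive the Poincar\'e inequality in $\overline{\mathcal{H}}^1$ directly from the decay estimate of Theorem \ref{thm:ExponentialConv} by integrating it in time and then applying Cauchy--Schwarz to the joint spectral measure of $-\mathcal{L}$ and $-L$. The identity $\mathcal{A}_r(f)=(-Lf,f)_{L^2(\mu_r)}$ from \eqref{eqn-A_r} allows one to view the Poincar\'e inequality as a two-operator spectral statement, which is essentially what makes the argument work despite the non-trivial inner-product structure of $\overline{\mathcal{H}}^1$.

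First I would note that $-\mathcal{L}$ and $-L$ are both non-negative self-adjoint operators on $L^2(W,\mu_r)$ (by Corollary \ref{cor:Contractivity} and the standard Ornstein--Uhlenbeck theory) and commute on the common core $\mathcal{C}$, since $[\mathcal{L},L]=0$ there. They therefore admit a joint spectral resolution $E$ on $[0,\infty)^2$. Writing $d\mu_f(\lambda,\eta):=d(E(\lambda,\eta)f,f)_{L^2(\mu_r)}$ for $f\in D_{\overline{\mathcal{H}}^1}(\mathcal{L})$, the functional calculus yields
\[
\mathcal{A}_r(f)=\iint\lambda\,d\mu_f,\qquad \langle-\mathcal{L}f,f\rangle_{\overline{\mathcal{H}}^1}=\iint\lambda\eta\,d\mu_f,\qquad \mathcal{A}_r(P_tf)=\iint\lambda e^{-2\eta t}\,d\mu_f.
\]

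Next I would integrate the bound $\iint\lambda e^{-2\eta t}\,d\mu_f\le C\,\mathcal{A}_r(f)\,e^{-t/\nu}$ provided by Theorem \ref{thm:ExponentialConv} over $t\in[0,\infty)$, and use Fubini's theorem to obtain
\[
\iint\frac{\lambda}{2\eta}\,d\mu_f(\lambda,\eta)\le C\nu\,\mathcal{A}_r(f).
\]
In particular, $\mu_f$ assigns no mass to $\{\eta=0,\,\lambda>0\}$, so $\eta^{-1}$ is integrable with respect to the positive finite measure $d\rho_f:=\lambda\,d\mu_f$. Writing $1=\eta^{1/2}\cdot\eta^{-1/2}$ and applying the Cauchy--Schwarz inequality for $\rho_f$,
\[
\mathcal{A}_r(f)^{2}=\Bigl(\iint d\rho_f\Bigr)^{2}\le\iint\eta\,d\rho_f\cdot\iint\eta^{-1}\,d\rho_f=\langle-\mathcal{L}f,f\rangle_{\overline{\mathcal{H}}^1}\cdot 2C\nu\,\mathcal{A}_r(f),
\]
and dividing by $\mathcal{A}_r(f)$ (which we may assume positive, else the result is trivial) gives the Poincar\'e inequality with constant $2C\nu$.

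The main obstacle I expect is making the joint spectral calculus of the commuting pair $(-\mathcal{L},-L)$ rigorous: one must exhibit a common core on which both operators are essentially self-adjoint and commute, so that the joint spectral resolution genuinely exists, and then check that the three spectral identifications above carry over to every $f\in D_{\overline{\mathcal{H}}^1}(\mathcal{L})$ by an approximation argument from $\mathcal{C}$. Once this framework is in place, the integration in time and the single Cauchy--Schwarz step are entirely routine.
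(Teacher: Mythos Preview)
Your argument is correct and follows the same overall strategy as the paper, which does not give a detailed proof but simply indicates that Corollaries \ref{cor:PoincareIneq_1} and \ref{cor:PoincareIneq_2} are deduced from Theorem \ref{thm:ExponentialConv} ``in the same way as Nash--Liggett inequalities'' in \cite{I-N-Z}. Your integration-in-time of the decay estimate followed by a Cauchy--Schwarz step is precisely in this spirit.

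One remark on efficiency: the joint spectral calculus of the commuting pair $(-L,-\mathcal{L})$ is not really needed. Corollary \ref{cor:Contractivity} already asserts that $\mathcal{L}$ is self-adjoint and nonpositive on $\mathcal{H}^1$, and since the $\mathcal{H}^1$-norm and the $\overline{\mathcal{H}}^1$-norm are equivalent on mean-zero functions (by the ordinary Gaussian Poincar\'e inequality), $\mathcal{L}$ is also self-adjoint and nonpositive on $\overline{\mathcal{H}}^1$. Then $P_t$ is a self-adjoint semigroup on $\overline{\mathcal{H}}^1$ with $\|P_t\|_{\overline{\mathcal{H}}^1\to\overline{\mathcal{H}}^1}=e^{-t\sigma}$, where $\sigma=\inf\mathrm{spec}(-\mathcal{L}|_{\overline{\mathcal{H}}^1})$. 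Theorem \ref{thm:ExponentialConv} gives $e^{-2t\sigma}\le Ce^{-t/\nu}$ for all $t\ge 0$, so letting $t\to\infty$ forces $\sigma\ge 1/(2\nu)$, and the Poincar\'e inequality with constant $2\nu$ follows directly from the spectral theorem for a single operator. This bypasses the need to justify the joint spectral resolution, which is the only technical obstacle you flagged. Either route is valid; yours has the minor advantage of not relying on self-adjointness of $P_t$ in $\overline{\mathcal{H}}^1$, at the cost of the extra functional-analytic setup.
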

\begin{corollary}\label{cor:PoincareIneq_2}
There exists $C>0$ such that
\[
\mu_r(f-\mu_r f)^2\leq 2\nu (-\mathcal{L}f,f)_{L^2(\mu_r)}(1+max(0,\log{\frac{C ||f||_{\overline{\mathcal{H}}^1}^2}{2\nu(-\mathcal{L}f,f)_{L^2(\mu_r)}}})), f\in D(\mathcal{L})\cap\mathcal{H}^1.
\]
\end{corollary}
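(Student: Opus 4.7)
The plan is to interpolate the exponential $L^{2}$-decay from the preceding corollary (which costs the stronger norm $\|f\|_{\overline{\mathcal{H}}^{1}}^{2}=\mathcal{A}_{r}(f)$) against a trivial short-time bound in terms of the dynamical Dirichlet form $(-\mathcal{L}f,f)_{L^{2}(\mu_{r})}$, and then optimize over the splitting time $T\ge 0$. This is the standard recipe for turning an $\mathcal{H}^{1}\!\to\! L^{2}$ decay estimate into a logarithmic Poincar\'e-type inequality expressed in terms of the intrinsic Dirichlet form.

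By Corollary \ref{cor:Contractivity} (with $n=0$), $\mathcal{L}$ has a self-adjoint extension on $L^{2}(\mu_{r})$ generating the contraction semigroup $P_{t}$, and since constants are preserved, $P_{t}f-\mu_{r}f=P_{t}(f-\mu_{r}f)$. The usual semigroup identity then yields, for every $f\in D(\mathcal{L})$ and $T\ge 0$,
\[
\mu_{r}(f-\mu_{r}f)^{2}-\|P_{T}f-\mu_{r}f\|_{L^{2}(\mu_{r})}^{2}
=2\int_{0}^{T}(-\mathcal{L}P_{t}f,P_{t}f)_{L^{2}(\mu_{r})}\,dt.
\]
Self-adjointness gives $(-\mathcal{L}P_{t}f,P_{t}f)_{L^{2}(\mu_{r})}=\|P_{t}(-\mathcal{L})^{1/2}f\|_{L^{2}(\mu_{r})}^{2}\le (-\mathcal{L}f,f)_{L^{2}(\mu_{r})}$ by $L^{2}$-contractivity of $P_{t}$, while the preceding corollary bounds $\|P_{T}f-\mu_{r}f\|_{L^{2}(\mu_{r})}^{2}\le C\,\mathcal{A}_{r}(f)\,e^{-T/\nu}=C\,\|f\|_{\overline{\mathcal{H}}^{1}}^{2}\,e^{-T/\nu}$. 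Combining these two ingredients gives the one-parameter family
\[
\mu_{r}(f-\mu_{r}f)^{2}\;\le\; 2T\,(-\mathcal{L}f,f)_{L^{2}(\mu_{r})}+C\,\|f\|_{\overline{\mathcal{H}}^{1}}^{2}\,e^{-T/\nu},\qquad T\ge 0.
\]

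It only remains to minimize in $T$. Writing $a=2(-\mathcal{L}f,f)_{L^{2}(\mu_{r})}$ and $b=C\|f\|_{\overline{\mathcal{H}}^{1}}^{2}$, the map $T\mapsto aT+be^{-T/\nu}$ on $[0,\infty)$ has its interior critical point $T^{\star}=\nu\log(b/(a\nu))$ nonnegative exactly when $b\ge a\nu$, with value $a\nu\bigl(1+\log(b/(a\nu))\bigr)$; when $b<a\nu$ the minimum is $b\le a\nu$, attained at $T=0$. The two cases combine to the single bound $a\nu\bigl(1+\max(0,\log(b/(a\nu)))\bigr)$, which is exactly the claimed inequality. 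I do not anticipate a real obstacle: the only routine points to verify are that the $L^{2}$-derivative identity extends to all of $D(\mathcal{L})\cap\mathcal{H}^{1}$ (Hille--Yosida applied to the self-adjoint extension) and that $(-\mathcal{L})^{1/2}$ commutes with $P_{t}$ (immediate from the Borel functional calculus for self-adjoint operators).
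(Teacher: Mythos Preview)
Your proposal is correct and follows essentially the same approach as the paper, which does not give a detailed argument but simply refers to the Nash--Liggett type proof in \cite{I-N-Z}: interpolate the exponential $\mathcal{H}^{1}\to L^{2}$ decay of the preceding corollary against the short-time dissipation bound coming from the semigroup identity, then optimize over the splitting time. Your write-up is precisely this argument with all the details filled in.
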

Corollaries \ref{cor:PoincareIneq_1} and \ref{cor:PoincareIneq_2} can be deduced from the theorem \ref{thm:ExponentialConv}
in the same way as Nash-Liggett inequalities has been proven in \cite{I-N-Z}, see proof of Theorem 8.1.
\begin{remark}
The convergence in Theorem \ref{thm:ExponentialConv} cannot be improved. Indeed, let $S(l,t)=P_t(x_l^2)$ for $t\geq 0$ and $l\in\mathbb{N}$.
Then $\mathcal{L}x_l^2=k_l^2(x_{l+1}^2-x_l^2)+k_{l-1}^2(x_{l-1}^2-x_l^2),l\in\mathbb{N}$, so that,
$$
\frac{\partial S}{\partial t}=\triangle^k S,
$$
where $\triangle^k$ is defined by formula \eqref{eqn:LaplOperatorDef}.
Thus
\begin{equation}
S(t)=e^{t\triangle^k}S(0),t\geq 0,\label{eqn:ExponentialGrowth}
\end{equation}
so that convergence rate in the
Theorem \ref{thm:ExponentialConv} is achieved.
\end{remark}
\begin{remark}
If we consider the system \eqref{system} with the choice of positive $\{k_n\}_{n=1}^{\infty}$ such that $\sum\limits_{n=1}^{\infty}\frac{n}{k_n^2}=\infty$ then it is possible to show polynomial rate of convergence to equilibrium for the corresponding semigroup $P_t,t\geq 0$ 
in the same way as in the paper \cite{I-N-Z}.
\end{remark}
\begin{remark}
We have shown that the exponential rate of convergence for the semigroup $(P_t)_{t\geq 0}$ holds
if  $k_n=\lambda^n$, $\lambda>1$. In the same time, there is no spectral gap if $\lambda=1$.
Indeed, it is enough to notice that if $f_N=\sum\limits_{k=1}^N(x_k^2-1)$ then
\[
||f_N||_{\overline{\mathcal{H}}^1}^2\sim N,<-\mathcal{L}f_N,f_N>_{\overline{\mathcal{H}}^1}\mbox{is independent upon $N$},
\]
and corollary $13$ does not hold. Thus, the asymptotic behaviour of our conservative system depends upon the value of the parameter $\lambda$.
\end{remark}

\bibliography{Existence250710}

\begin{thebibliography}{10}

\bibitem{Anderson}
{\sc W.~J. Anderson}, {\em Continuous-Time Markov Chains: An
  Applications-Oriented Approach}, no.~7 in Springer Series in Statistics:
  Probability and its Applications, Springer-Verlag, 1991.

\bibitem{B-F-M-2010}
{\sc D.~Barbato, F.~Flandoli, and F.~Morandin}, {\em Anomalous dissipation in a
  stochastic inviscid dyadic model}, to appear in Ann. Appl. Probab.

\bibitem{B-F-M-2009}
\leavevmode\vrule height 2pt depth -1.6pt width 23pt, {\em Uniqueness for a
  stochastic inviscid dyadic model}, Proc. Amer. Math. Soc., 138 (2010),
  pp.~2607--2617.

\bibitem{B2007}
{\sc C.~Bernardin}, {\em Hydrodynamics for a system of harmonic oscillators
  perturbed by a conservative noise}, Stochastic Process. Appl., 117 (2007),
  pp.~487--513.

\bibitem{B2008}
\leavevmode\vrule height 2pt depth -1.6pt width 23pt, {\em Superdiffusivity of
  asymmetric energy model in dimensions 1 and 2}, J. Math. Phys., 49 (2008),
  p.~103301.

\bibitem{BZ99}
{\sc L.~Bertini and B.~Zegarli\'nski}, {\em Coercive inequalities for {G}ibbs
  measures}, J. Funct. Anal., 162 (1999), pp.~257--286.

\bibitem{BZ99b}
\leavevmode\vrule height 2pt depth -1.6pt width 23pt, {\em Coercive
  inequalities for {K}awasaki dynamics: the product case}, Markov Processes and
  Rel. Fields, 5 (1999), pp.~125--162.

\bibitem{Bogachev}
{\sc V.~I. Bogachev}, {\em Gaussian measures}, no.~62 in Mathematical surveys
  and monographs, 1998.

\bibitem{Dob71}
{\sc R.~L. Dobrushin}, {\em Markov processes with a large number of locally
  interacting components: existence of a limit process and its ergodicity},
  Problems Inform. Transmission, 7 (1971), pp.~149--164.

\bibitem{FlandoliGatarek_1995}
{\sc F.~Flandoli and D.~G\c{a}tarek}, {\em Martingale and stationary solutions
  for stochastic navier-stokes equations}, Probab. Theory Relat. Fields, 102
  (1995), pp.~367--391.

\bibitem{F-N-O}
{\sc J.~Fritz, K.~Nagy, and S.~Olla}, {\em Equilibrium fluctuations for a
  system of harmonic oscillators with conservative noise}, J. Stat. Phys., 122
  (2006), pp.~399--415.

\bibitem{G-K-R}
{\sc C.~Giardin\`{a}, J.~Kurchan, and F.~Redig}, {\em Duality and exact
  correlations for a model of heat conduction}, J. Math. Phys., 48 (2007),
  p.~103301.

\bibitem{G-K-R-V}
{\sc C.~Giardin\`{a}, J.~Kurchan, F.~Redig, and K.Vafayi}, {\em Duality and
  hidden symmetries in interacting particle systems}, J. Stat. Phys., 135
  (2009), pp.~25--55.

\bibitem{Holevo1996}
{\sc A.~S. Holevo}, {\em On dissipative stochastic equations in a hilbert
  space}, Probab. Theory Relat. Fields, 104 (1996), pp.~483--500.

\bibitem{I-N-Z}
{\sc J.~Inglis, M.~Neklyudov, and B.~Zegarlinski}, {\em Ergodicity for infinite
  particle systems with locally conserved quantities}, arXiv:1002.0282,
  (2010).

\bibitem{JLQY1999}
{\sc E.~Janvresse, C.~Landim, J.~Quastel, and H.~T. Yau}, {\em Relaxation to
  equilibrium of conservative dynamics. {I}. {Z}ero-range processes}, Ann.
  Probab., 27 (1999), pp.~325--360.

\bibitem{Kato_1954}
{\sc T.~Kato}, {\em On the semi-groups generated by {K}olmogoroff's
  differential equations}, J. Math. Soc. Japan, 6 (1954), pp.~1--15.

\bibitem{KrylovRoz}
{\sc N.~Krylov and B.~Rozovskii}, {\em Stochastic Evolution Equations (in
  Russian)}, no.~14 in Itogi Nauki i Tekhniki, Seria Sovremiennyie Problemy
  Matematiki, 1979.

\bibitem{LY2003}
{\sc C.~Landim and H.~T. Yau}, {\em Convergence to equilibrium of conservative
  particle systems on $\mathbb{Z}^d$}, Ann. Probab., 31 (2003), pp.~115--147.

\bibitem{Lig04}
{\sc T.~Liggett}, {\em Interacting Particle Systems}, Springer, 2004.

\bibitem{Pardoux}
{\sc E.~Pardoux}, {\em Stochastic partial differential equations and filtering
  of diffusion processes}, Stochastics, 3 (1979), pp.~127--167.

\bibitem{Spi69}
{\sc F.~Spitzer}, {\em Random processes defined through the interaction of an
  infinite particle system}, 89 (1969), pp.~201--223.

\end{thebibliography}

\end{document}